\title{From fractions to complete Segal spaces}
\author{Zhen~Lin Low and Aaron Mazel-Gee}
\date{May 15, 2015}
\begin{document}

\maketitle
\footpar{\textsc{E-mail addresses}: \texttt{Z.L.Low@dpmms.cam.ac.uk} and  \texttt{aaron@math.berkeley.edu}}

\begin{abstract}
We show that the Rezk classification diagram of a relative category admitting a homotopical version of the two-sided calculus of fractions is a Segal space up to Reedy-fibrant replacement. This generalizes the result of Rezk and Bergner on the classification diagram of a closed model category, as well as the result of Barwick and Kan on the classification diagram of a partial model category.
\end{abstract}

% !TEX root = FromFractionsToCSS.tex
\section{Introduction}

It is now commonly understood that one has a (generalized) homotopy theory whenever one has relative category, \ie, a category equipped with a subcategory of distinguished morphisms to be thought of as weak equivalences. The name `relative category' is due to \citet{Barwick-Kan:2012a}, but the idea already appeared in a paper of \citet{Dwyer-Kan:1980a}, in which they defined the simplicial localization of a relative category. Simplicial localization should be regarded as a homotopy-theoretic refinement of the usual procedure of freely inverting weak equivalences in a relative category, where instead of adjoining on-the-nose inverses for weak equivalences, one adjoins up-to-homotopy inverses. Of course, in doing so, one ends up also adjoining homotopies to the original category, which is one reason why we say the result of simplicial localization is a homotopy theory.

One particularly elegant reification of the concept of `homotopy theory' is the notion of `complete Segal space' introduced by \citet{Rezk:2001}: these are simplicial spaces (or more accurately, bisimplicial sets) that are Reedy-fibrant and satisfy certain conditions. These should be regarded as a homotopy-theoretic version of categories\hairspace ---\hairspace in other words, as $\tuple{\infty, 1}$-categories. In \opcit, Rezk defined for any relative category $\mathcal{C}$ the classification diagram $\Nv{\mathcal{C}}_{\bullet}$, a simplicial space whose $n$-th level classifies the weak equivalence classes of composable chains of morphisms in $\mathcal{C}$ of length $n$, and he proved the following result:

\begin{thm*}
Let $\mathcal{M}$ be a simplicial model category. Then any Reedy-fibrant replacement $\widehat{\Nv{\mathcal{M}}}_{\bullet}$ of the classification diagram $\Nv{\mathcal{M}}_{\bullet}$ is a complete Segal space, and moreover the hom-spaces of $\widehat{\Nv{\mathcal{M}}}_{\bullet}$ agree with the homotopy function complexes of $\mathcal{M}$ up to weak homotopy equivalence.
\end{thm*}

As it turns out, Rezk's result holds more generally. \citet[\Sect 6]{Bergner:2009} proved the case where $\mathcal{M}$ is a model category (\ie, not necessarily simplicial), without using functorial factorizations. On the other hand, \citet[\Sect 3]{Barwick-Kan:2011} verified the case where $\mathcal{M}$ is a partial model category. (The notion of partial model category is a generalization of the notion of model category with functorial factorizations.) The main result of this paper is a generalization of both results: we do not require functorial factorizations (as Barwick and Kan do), and we do not require lifting properties or (co)completeness (as Bergner does).

In fact, our result relies on very few assumptions. Let $\mathcal{C}$ be a relative category. By thinking geometrically, one sees that if the  Rezk classification diagram $\Nv{\mathcal{C}}_{\bullet}$ is Reedy weakly equivalent to a Segal space, then it must be the case that every morphism in $\Ho \mathcal{C}$ admits a factorization of the form
\[
\inv{w}_n \circ f_n \circ \cdots \circ \inv{w}_1 \circ f_1 \circ \inv{w}_0
\]
where $w_0, \ldots, w_n$ are weak equivalences in $\mathcal{C}$ and $f_1, \ldots, f_n$ are morphisms in $\mathcal{C}$, of which at most one is \emph{not} a weak equivalence. Thus one might expect that for the Rezk classification diagram $\Nv{\mathcal{C}}_{\bullet}$ to be a complete Segal space up to Reedy-fibrant replacement, it suffices that $\mathcal{C}$ be saturated in the sense of \citep[\Sect 8]{DHKS} and admit a suitable three-arrow calculus\hairspace ---\hairspace namely the homotopy calculus of fractions in the sense of \citet{Dwyer-Kan:1980b}. This is precisely what we will show. 

Aside from the Dwyer--Kan homotopy calculus of fractions, the key technical result we use is a version of Quillen's Theorem B for recognizing homotopy pullback diagrams. In this respect, our proof differs from both that of \citet{Bergner:2009}, which uses the classifying complex of the simplicial monoid of weak equivalences, and that of \citet {Barwick-Kan:2011}, which uses a sophisticated generalization of Quillen's Theorem B.

This paper is organized as follows:
\begin{itemize}
\item In \Sect 1, we recall how to do homotopy theory with categories \`a la \citet[\Sect 1]{Quillen:1973a}.

\item In \Sect 2, we set up notation and basic results for working with zigzags in relative categories.

\item In \Sect 3, we prove the main result.
\end{itemize}

\subsection*{Conventions}

\begin{itemize}
\item By `natural number' we mean a non-negative integer: $0, 1, 2, \ldots$

\item By `model category' we mean a closed model category in the sense of \citet{Quillen:1967}, \ie, we do \emph{not} require functorial factorizations and we only require \emph{finite} limits and colimits.

\item To avoid set-theoretic difficulties, we will focus on small categories, \ie, categories that only have a set of objects and morphisms rather than a proper class. This is no real restriction under the assumption of a suitable universe axiom.

\item We treat $\cat{\Cat}$ as an ordinary category: so for example, `pullback' refers to the strict notion.
\end{itemize}

\subsection*{Acknowledgements}

This collaboration would not have happened without the `Homotopy Theory' chat room on MathOverflow.

The first-named author gratefully acknowledges financial support from the Cambridge Commonwealth, European and International Trust and the Department of Pure Mathematics and Mathematical Statistics. The second-named author gratefully acknowledges financial support from UC Berkeley's Geometry and Topology RTG grant, which is part of NSF grant DMS-0838703. The authors thank an anonymous referee for helpful comments and suggestions.

\section{Homotopy theory with categories}

Every category $\mathcal{C}$ has an associated simplicial set $\nv{\mathcal{C}}$, called its \emph{nerve}, and this construction assembles into a functor $\nv : \cat{\Cat} \to \cat{\SSet}$.  This allows us to think of a category as being a ``presentation of a space''.  In this section, we recall some of the basic definitions and results concerning the manipulation of categories in this capacity.

\begin{dfn}
A \strong{weak homotopy equivalence} of categories is a functor $F : \mathcal{C} \to \mathcal{D}$ such that the morphism $\nv{F} : \nv{\mathcal{C}} \to \nv{\mathcal{D}}$ of nerves is a weak homotopy equivalence of simplicial sets.
\end{dfn}

\begin{lem}
\label{lem:homotopy.equivalences.of.categories}
Let $F : \mathcal{C} \to \mathcal{D}$ and $G : \mathcal{D} \to \mathcal{C}$ be functors. If there exist zigzags of natural transformations between $\id_\mathcal{C}$ and $G F$ and between $F G$ and $\id_\mathcal{D}$, then both $F$ and $G$ are weak homotopy equivalences of categories.
\end{lem}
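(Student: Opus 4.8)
The plan is to use the classical fact that the nerve functor turns natural transformations into homotopies. First I would recall that a natural transformation $\alpha\colon H_0\Rightarrow H_1$ between functors $\mathcal{A}\to\mathcal{B}$ is precisely a single functor $\mathcal{A}\times[1]\to\mathcal{B}$ restricting to $H_0$ on $\mathcal{A}\times\set{0}$ and to $H_1$ on $\mathcal{A}\times\set{1}$, where $[1]$ denotes the category $\set{0\to1}$. Since $\nv$ preserves finite products and $\nv{[1]}=\Delta^1$, applying $\nv$ to this functor yields a simplicial map $\nv{\mathcal{A}}\times\Delta^1\to\nv{\mathcal{B}}$, i.e.\ a simplicial homotopy from $\nv{H_0}$ to $\nv{H_1}$. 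Passing to geometric realisations and using that $\lr|{\blank}|$ commutes with $\blank\times\Delta^1$ (so that $\lr|{\nv{\mathcal{A}}\times\Delta^1}|\cong\lr|{\nv{\mathcal{A}}}|\times[0,1]$), we obtain an honest homotopy $\lr|{\nv{H_0}}|\simeq\lr|{\nv{H_1}}|$ of continuous maps. The direction of $\alpha$ is immaterial here: whichever way it points, it produces a homotopy between the same two realised maps, so each edge of a zigzag of natural transformations contributes such a homotopy.

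I would then simply chase this along the two given zigzags. Because homotopy of continuous maps is an equivalence relation, a zigzag of natural transformations between $\id_{\mathcal{C}}$ and $GF$ yields, edge by edge, a homotopy $\id_{\lr|{\nv{\mathcal{C}}}|}=\lr|{\nv{\id_{\mathcal{C}}}}|\simeq\lr|{\nv{GF}}|=\lr|{\nv{G}}|\circ\lr|{\nv{F}}|$, using functoriality of $\nv$ and of $\lr|{\blank}|$; symmetrically the zigzag between $FG$ and $\id_{\mathcal{D}}$ gives $\lr|{\nv{F}}|\circ\lr|{\nv{G}}|\simeq\id_{\lr|{\nv{\mathcal{D}}}|}$. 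Hence $\lr|{\nv{F}}|$ is a homotopy equivalence of spaces with homotopy inverse $\lr|{\nv{G}}|$, in particular a weak homotopy equivalence, so $\nv{F}$ is a weak homotopy equivalence of simplicial sets and $F$ is a weak homotopy equivalence of categories; the same argument (or symmetry) handles $G$.

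I do not anticipate a genuine obstacle here: the argument is essentially formal once the two standard inputs are granted, namely that $\nv$ preserves finite products (immediate from $\tuple{\nv{\mathcal{A}}}_n=\cat{\Cat}\tuple{[n],\mathcal{A}}$) and that geometric realisation sends simplicial homotopies to topological ones. Both go back to \citet[\Sect 1]{Quillen:1973a}, and if one prefers to avoid realisations altogether one can argue inside the Kan--Quillen model structure, where a simplicial homotopy already forces two maps to agree in the homotopy category; indeed the whole statement may simply be cited from \loccit.
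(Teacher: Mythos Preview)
Your argument is correct and is essentially the same as the paper's: natural transformations induce simplicial homotopies on nerves, so the given zigzags make $\nv{F}$ and $\nv{G}$ mutually inverse up to homotopy, hence weak homotopy equivalences. The paper compresses this to a single sentence (staying at the level of ``homotopy equivalences of simplicial sets''), whereas you spell out the passage through geometric realisation; this is a harmless elaboration rather than a different approach.
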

\begin{proof}
As natural transformations translate to homotopies of morphisms of nerves, the hypothesis implies that the morphisms $\nv{F} : \nv{\mathcal{C}} \to \nv{\mathcal{D}}$ and $\nv{G} : \nv{\mathcal{D}} \to \nv{\mathcal{C}}$ are homotopy equivalences of simplicial sets, and hence weak homotopy equivalences \emph{a fortiori}.
\end{proof}

\begin{dfn}
A \strong{homotopy pullback diagram} of categories is a commutative square that $\nv : \cat{\Cat} \to \cat{\SSet}$ sends to a homotopy pullback diagram of simplicial sets.
\end{dfn}

Quillen's Theorem B gives us a way of recognizing homotopy pullback diagrams. Let us first recall the following definition:

\begin{dfn}
A \strong{Grothendieck fibration} is a functor $P : \mathcal{E} \to \mathcal{B}$ such that, for every object $E$ in $\mathcal{E}$ and every morphism $f : B' \to P \argp{E}$ in $\mathcal{B}$, there exist an object $f^* E$ in $\mathcal{E}$ and a morphism $g : f^* E \to E$ in $\mathcal{E}$ such that $P \argp{g} = f$ and, for each object $E''$ in $\mathcal{E}$, the natural map
\begin{align*}
\Hom[\mathcal{E}]{E''}{f^* E} 
& \to \set{ \tuple{h, f'} \in \Hom[\mathcal{E}]{E''}{E} \times \Hom[\mathcal{B}]{P \argp{E''}}{B'} }{ P \argp{h} = f \circ f' } \\
g' & \mapsto \tuple{g \circ g', P \argp{g'}}
\end{align*}
is a bijection.

Dually, a \strong{Grothendieck opfibration} is a functor $P : \mathcal{E} \to \mathcal{B}$ such that $\op{P} : \op{\mathcal{E}} \to \op{\mathcal{B}}$ is a Grothendieck fibration. %
%A \strong{Grothendieck opfibration} is a functor $P : \mathcal{E} \to \mathcal{B}$ such that, for every object $E$ in $\mathcal{E}$ and every morphism $f : P \argp{E} \to B'$ in $\mathcal{B}$, there exist an object $f_* E$ in $\mathcal{E}$ where $P \argp{f_* E} = B'$ and a bijection
%\[
%\Hom[\mathcal{E}]{f_* E}{E''} \cong \set{ \tuple{g, f'} \in \Hom[\mathcal{E}]{E}{E'} \times \Hom[\mathcal{B}]{B'}{P \argp{E''}} }{ P \argp{g} = f' \circ f }
%\]
%that is natural in $E'$.
\end{dfn}

\begin{remark}
Let $P : \mathcal{E} \to \mathcal{B}$ be a Grothendieck fibration and let $B$ be an object in $\mathcal{B}$. For any morphism $f : B' \to B$ in $\mathcal{B}$, the assignment $E \mapsto f^* E$ extends to a functor $\inv{P} \set{ B } \to \inv{P} \set{ B' }$; moreover, given a morphism $f' : B'' \to B'$ in $\mathcal{B}$, there is a canonical  isomorphism $f^{\prime *} f^* \cong \parens{f \circ f'}^*$ of functors $\inv{P} \set{ B } \to \inv{P} \set{ B'' }$. (More precisely, the assignment $f \mapsto f^*$ defines a  \emph{pseudofunctor}.) We say a Grothendieck fibration is \strong{split} if it is possible to choose the functors $f^*$ so that the assignment $f \mapsto f^*$ defines a functor $\op{\mathcal{B}} \to \cat{\Cat}$.
\end{remark}

\begin{example}
Let $\mathcal{C}$ be a category and let $\Func{\bracket{1}}{\mathcal{C}}$ be the category whose objects are the morphisms in $\mathcal{C}$ and whose morphisms are the commutative squares. Then the projection $\dom : \Func{\bracket{1}}{\mathcal{C}} \to \mathcal{C}$ is always a split Grothendieck fibration: the fiber over an object $A$ in $\mathcal{C}$ is the coslice category $\undercat{A}{\mathcal{C}}$, and for each morphism $f : A' \to A$ in $\mathcal{C}$, we can take the functor $f^* : \undercat{A}{\mathcal{C}} \to \undercat{A'}{\mathcal{C}}$ to be the one defined by precomposition.
\end{example}

\begin{thm}
\label{thm:Quillen-B}
Consider a pullback square in $\cat{\Cat}$:
\[
\begin{tikzcd}
\mathcal{E} \dar[swap]{p} \rar{v} &
\mathcal{F} \dar{q} \\
\mathcal{A} \rar[swap]{u} &
\mathcal{B}
\end{tikzcd}
\]
\begin{itemize}
\item If $q : \mathcal{F} \to \mathcal{B}$ is a Grothendieck opfibration such that the induced morphism $\inv{q} \set{b'} \to \inv{q} \set{b}$ is a weak homotopy equivalence of categories for every morphism $b' \to b$ in $\mathcal{B}$, then the above is a homotopy pullback diagram.

\item If $q : \mathcal{F} \to \mathcal{B}$ is a Grothendieck fibration such that the induced morphism $\inv{q} \set{b'} \to \inv{q} \set{b}$ is a weak homotopy equivalence of categories for every morphism $b \to b'$ in $\mathcal{B}$, then the above is a homotopy pullback diagram.
\end{itemize}
\end{thm}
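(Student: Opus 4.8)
The plan is to deduce the theorem from Quillen's Theorem~B in its classical comma-category form, after two routine reductions. First, the two cases collapse to one: $\op{\pblank}$ interchanges Grothendieck fibrations with Grothendieck opfibrations, turns a morphism $b' \to b$ into a morphism $b \to b'$, and --- because $\nv$ intertwines $\op{\pblank}$ on $\cat{\Cat}$ with the order-reversal automorphism of $\cat{\SSet}$, which preserves weak equivalences, pullbacks, and homotopy pullback diagrams --- it carries the first bullet point onto the second. So we may assume $q$ is a Grothendieck fibration. Second, Grothendieck fibrations are stable under pullback along an arbitrary functor, and this operation leaves the fibres unchanged: $\inv{p} \set{a} = \inv{q} \set{u \argp{a}}$, with matching transition functors. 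Hence $p : \mathcal{E} \to \mathcal{A}$ is again a Grothendieck fibration satisfying the same hypothesis; this will be used at the end.

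The bridge to Quillen's setting is via comma categories. For an object $b$ of $\mathcal{B}$, let $\commacat{b}{q}$ be the comma category whose objects are the pairs $\tuple{x, \beta}$ with $\beta : b \to q \argp{x}$. Choosing cartesian lifts gives a functor $\commacat{b}{q} \to \inv{q} \set{b}$, $\tuple{x, \beta} \mapsto \beta^* x$, which is right adjoint to the evident inclusion $\inv{q} \set{b} \embedinto \commacat{b}{q}$; by Lemma~\ref{lem:homotopy.equivalences.of.categories} both functors are weak homotopy equivalences. Using the pseudofunctoriality of $f \mapsto f^*$, one checks that for every $f : b \to b'$ this adjoint pair intertwines, up to natural isomorphism, the transition functor $\inv{q} \set{b'} \to \inv{q} \set{b}$ with the precomposition functor $\commacat{b'}{q} \to \commacat{b}{q}$. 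So the hypothesis that $\inv{q} \set{b'} \to \inv{q} \set{b}$ is a weak homotopy equivalence for every $f : b \to b'$ is equivalent to the statement that $\commacat{b'}{q} \to \commacat{b}{q}$ is one for every such $f$ --- which is exactly the hypothesis of Quillen's Theorem~B for the functor $q : \mathcal{F} \to \mathcal{B}$ \citep[\Sect 1]{Quillen:1973a}.

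Quillen's Theorem~B then yields, for each $b$, that the square exhibiting $\commacat{b}{q}$ as the strict pullback $\mathcal{F} \times_{\mathcal{B}} \commacat{b}{\mathcal{B}}$ is a homotopy pullback diagram; since $\commacat{b}{\mathcal{B}}$ has an initial object and hence is weakly contractible, this --- together with the equivalence of the previous paragraph --- identifies $\nv{\inv{q} \set{b}}$ with the homotopy fibre of $\nv{q}$ over $b$. Applying the same to $p$ in place of $q$ identifies $\nv{\inv{p} \set{a}}$ with the homotopy fibre of $\nv{p}$ over $a$. Since $\nv$ preserves pullbacks we have $\nv{\mathcal{E}} = \nv{\mathcal{A}} \times_{\nv{\mathcal{B}}} \nv{\mathcal{F}}$, and as $v$ restricts on fibres to the identification $\inv{p} \set{a} = \inv{q} \set{u \argp{a}}$, the canonical comparison map from $\nv{\mathcal{E}}$ to the homotopy pullback of $\nv{u}$ and $\nv{q}$ over $\nv{\mathcal{B}}$ induces a weak homotopy equivalence on homotopy fibres over every vertex $a$ of $\nv{\mathcal{A}}$. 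By the standard fibrewise recognition criterion for homotopy pullbacks of simplicial sets, this comparison map is itself a weak homotopy equivalence; that is, the original square is a homotopy pullback diagram.

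The difficulty I anticipate is organizational rather than conceptual: one must check that the identifications of fibres with comma categories are natural enough to transport both the hypothesis and the conclusion of Quillen's Theorem~B, and one must invoke the fibrewise recognition principle for homotopy pullbacks of simplicial sets. A reader who would rather not cite Quillen's Theorem~B as a black box can instead rerun Quillen's proof of it directly in the present setting; that is where the genuine homotopy theory resides, but it is a well-trodden path.
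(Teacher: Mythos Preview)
Your argument is correct and follows essentially the same route as the paper's: reduce to one case by duality, use the adjunction between the strict fibre and the comma category to transport the hypothesis into the form required by the classical Quillen Theorem~B, and then conclude by a fibrewise comparison of the strict pullback with the homotopy pullback. The only cosmetic differences are that the paper treats the opfibration case directly (working with $\commacat{q}{b}$ rather than $\commacat{b}{q}$) and phrases the final step as showing that the comparison map to an explicitly constructed Kan-fibrant model $\hat{E}$ is a homotopy-fibrewise weak equivalence, rather than invoking the fibrewise recognition criterion as a black box.
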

\begin{proof}
The two claims are formally dual; we will prove the first version.

We may construct a commutative diagram in $\cat{\SSet}$ of the form below,
\[
\begin{tikzcd}
\nv{\mathcal{A}} \dar[swap]{i_A} \rar{\nv{u}} &
\nv{\mathcal{B}} \dar{i_B} &
\nv{\mathcal{F}} \dar{i_F} \lar[swap]{\nv{q}} \\
\hat{A} \rar[swap]{\hat{u}} &
\hat{B} &
\hat{F} \lar{\hat{q}}
\end{tikzcd}
\]
where the vertical arrows are weak homotopy equivalences, the horizontal arrows in the bottom row are Kan fibrations, and the objects in the bottom row are Kan complexes. Then, form the following pullback diagram in $\cat{\SSet}$:
\[
\begin{tikzcd}
\hat{E} \dar[swap]{\hat{p}} \rar{\hat{v}} & 
\hat{F} \dar{\hat{q}} \\
\hat{A} \rar[swap]{\hat{u}} &
\hat{B}
\end{tikzcd}
\]
Since $\hat{u} : \hat{A} \to \hat{B}$ and $\hat{q} : \hat{F} \to \hat{B}$ are Kan fibrations, the above is a homotopy pullback diagram. We wish to show that the induced morphism $i_E : \nv{\mathcal{E}} \to \hat{E}$ is a weak homotopy equivalence. 

For each object $b$ in $\mathcal{B}$, let $\commacat{q}{b} = \overcat{\mathcal{B}}{b} \times_\mathcal{B} \mathcal{E}$ be the comma category. Since $q : \mathcal{F} \to \mathcal{B}$ is a Grothendieck opfibration, the evident inclusion $\inv{q} \set{b} \to \commacat{q}{b}$ has a left adjoint and hence is a weak homotopy equivalence of categories (by \autoref{lem:homotopy.equivalences.of.categories}). We then have the following commutative diagram,
\[
\begin{tikzcd}
\inv{q} \set{ b } \dar \rar[hookrightarrow] &
\commacat{q}{b} \dar \\
\inv{q} \set{ b' } \rar[hookrightarrow] &
\commacat{q}{b'}
\end{tikzcd}
\]
where the horizontal arrows in the above diagram are weak homotopy equivalences, so our hypothesis on the functor $\inv{q} \set{ b } \to \inv{q} \set{ b' }$ implies that $\commacat{q}{b} \to \commacat{q}{b'}$ is also a weak homotopy equivalence. Thus, we may apply Quillen's Theorem B in its usual form\footnote{See \citep[\Sect 1]{Quillen:1973a} or \citep[\Chap III, \Sect 5.2]{GJ}.} to deduce that the pullback diagrams
\[
\mmhfill
\begin{tikzcd}
\inv{p} \set{a} \dar \rar[hookrightarrow] &
\mathcal{E} \dar{p} \\
\set{a} \rar[hookrightarrow] &
\mathcal{A}
\end{tikzcd}
\mmhfill
\begin{tikzcd}
\inv{q} \set{u \argp{a}} \dar \rar[hookrightarrow] &
\mathcal{F} \dar{q} \\
\set{u \argp{a}} \rar[hookrightarrow] &
\mathcal{B}
\end{tikzcd}
\mmhfill
\]
are homotopy pullback diagrams. Hence $i_E : \nv{\mathcal{E}} \to \hat{E}$ is a homotopy-fiberwise weak homotopy equivalence of objects over $\hat{A}$, and it follows that $i_E : \nv{\mathcal{E}} \to \hat{E}$ is a weak homotopy equivalence.
\end{proof}

\begin{cor}
\label{cor:binary.products.are.homotopy.products}
Let $\mathcal{A}$ and $\mathcal{B}$ be categories. Then the commutative diagram
\[
\begin{tikzcd}
\mathcal{A} \times \mathcal{B} \dar \rar &
\mathcal{B} \dar \\
\mathcal{A} \rar &
\bracket{0}
\end{tikzcd}
\]
where the arrows are the evident projections and $\bracket{0}$ is the terminal category,  is a homotopy pullback diagram.
\end{cor}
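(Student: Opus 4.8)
The plan is to apply \autoref{thm:Quillen-B}. First observe that, because $\bracket{0}$ is the terminal object of $\cat{\Cat}$ (which we are treating as an ordinary category), the canonical comparison functor $\mathcal{A} \times \mathcal{B} \to \mathcal{A} \times_{\bracket{0}} \mathcal{B}$ is an isomorphism; so, up to this identification, the displayed square is a genuine pullback square of exactly the shape required by \autoref{thm:Quillen-B}, with the right-hand vertical arrow being the unique functor $q : \mathcal{B} \to \bracket{0}$, the bottom arrow the unique functor $\mathcal{A} \to \bracket{0}$, and the two arrows out of $\mathcal{A} \times \mathcal{B}$ the evident projections.

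It then remains to check that $q : \mathcal{B} \to \bracket{0}$ satisfies the hypothesis of, say, the first bullet of \autoref{thm:Quillen-B}. Since $\bracket{0}$ has exactly one object $\ast$ and its only morphism is $\id_\ast$, every instance of the lifting condition in the definition of a Grothendieck opfibration against $q$ is posed over an identity morphism and is solved by an identity morphism of $\mathcal{B}$; hence $q$ is a (split) Grothendieck opfibration, and its single fibre $\inv{q} \set{\ast}$ is $\mathcal{B}$ itself. The only morphism $b' \to b$ in $\bracket{0}$ is $\id_\ast$, and the endofunctor of $\inv{q} \set{\ast} = \mathcal{B}$ that it induces is $\id_\mathcal{B}$, which is trivially a weak homotopy equivalence of categories. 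Thus the hypotheses of \autoref{thm:Quillen-B} are met, and the square is a homotopy pullback diagram.

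There is essentially no obstacle: the statement is a degenerate instance of \autoref{thm:Quillen-B}, and the only point that warrants a moment's care is the purely formal identification of $\mathcal{A} \times \mathcal{B}$ with the strict pullback $\mathcal{A} \times_{\bracket{0}} \mathcal{B}$, which is immediate from the fact that $\bracket{0}$ is terminal.
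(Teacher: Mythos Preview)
Your proof is correct and follows essentially the same approach as the paper: the paper simply observes that the unique functor $\mathcal{B} \to \bracket{0}$ is a Grothendieck fibration and invokes \autoref{thm:Quillen-B}. You use the opfibration clause instead of the fibration clause and spell out the verification of the hypotheses in more detail, but the argument is the same.
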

\begin{proof}
The unique functor $\mathcal{B} \to \bracket{0}$ is a Grothendieck fibration, so we may apply \autoref{thm:Quillen-B}.
\end{proof}

Let us recall Definition 9.1 from \citep{Dwyer-Kan:1980b}:

\begin{dfn}
Let $\mathcal{C}$ be a category and let $\mathcal{F} : \op{\mathcal{C}} \to \cat{\Cat}$ and $\mathcal{G} : \mathcal{C} \to \cat{\Cat}$ be functors. The \strong{two-sided  Grothendieck construction} $\mathcal{F} \otimes_\mathcal{C} \mathcal{G}$ is the following category:
\begin{itemize}
\item The objects are triples $\tuple{X, C, Y}$, where $C$ is an object in $\mathcal{C}$, $X$ is an object in $\mathcal{F} \argp{C}$, and $Y$ is an object in $\mathcal{G} \argp{C}$.

\item The morphisms $\tuple{X', C', Y'} \to \tuple{X, C, Y}$ are triples $\tuple{f, c, g}$, where $c : C' \to C$ is a morphism in $\mathcal{C}$, $f : X' \to \mathcal{F} \argp{c} \argp{X}$ is a morphism in $\mathcal{F} \argp{C'}$, and $g : \mathcal{G} \argp{c} \argp{Y'} \to Y$ is a morphism in $\mathcal{G} \argp{C}$; \confer the diagram below:
\[
\begin{tikzcd}
F' \rar{f} &
\mathcal{F} \argp{c} \argp{F} &
C' \dar{c} &
G' \dar[mapsto, swap]{\mathcal{G} \argp{c}} \\
&
F \uar[mapsto, swap]{\mathcal{F} \argp{c}} &
C &
\mathcal{G} \argp{c} \argp{G'} \rar[swap]{g} &
G
\end{tikzcd}
\] 

\item Composition and identities are defined in the obvious way.
\end{itemize}
\end{dfn}

\begin{remark}
\label{rem:Grothendieck.constructions.and.Grothendieck.fibrations}
Let $\mathcal{C}$ be a category and let $*$ be the constant functor $\mathcal{C} \to \cat{\Cat}$ with value $\mathbf{1}$ (the terminal category). Then, for any functor $\mathcal{F} : \op{\mathcal{C}} \to \cat{\Cat}$, the evident projection $\mathcal{F} \otimes_\mathcal{C} {*} \to \mathcal{C}$ is a split Grothendieck fibration.
\end{remark}

The two-sided Grothendieck construction $\mathcal{F} \otimes_\mathcal{C} \mathcal{G}$ can be thought of as being the homotopy colimit of $\mathcal{G}$ weighted by $\mathcal{F}$. Indeed, we have the following homotopy-invariance property:

\begin{lem}
\label{lem:two-sided.Grothendieck.construction.is.homotopical}
Let $\mathcal{C}$ be a category, let $\mathcal{F}, \mathcal{F}' : \op{\mathcal{C}} \to \cat{\Cat}$ and $\mathcal{G}, \mathcal{G}' : \mathcal{C} \to \cat{\Cat}$ be functors, and let $\phi : \mathcal{F}' \hoto \mathcal{F}$ and $\psi : \mathcal{G}' \hoto \mathcal{G}$ be natural transformations. If each $\phi_C : \mathcal{F}' \argp{C} \to \mathcal{F} \argp{C}$ and each $\psi_C : \mathcal{G}' \argp{C} \to \mathcal{G} \argp{C}$ is a weak homotopy equivalence of categories, then the induced functor $\phi \otimes_\mathcal{C} \psi : \mathcal{F}' \otimes_\mathcal{C} \mathcal{G}' \to \mathcal{F} \otimes_\mathcal{C} \mathcal{G}$ is also a weak homotopy equivalence of categories.
\end{lem}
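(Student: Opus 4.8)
The plan is to reduce, in two formal steps, to the homotopy-invariance of the ordinary Grothendieck construction, which I expect to be the only substantive ingredient.

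\emph{Step 1: reduction to one variable.} Factor the functor $\phi \otimes_\mathcal{C} \psi = \parens{\phi \otimes_\mathcal{C} \id_\mathcal{G}} \circ \parens{\id_{\mathcal{F}'} \otimes_\mathcal{C} \psi}$; since weak homotopy equivalences of categories are closed under composition, it is enough to treat the two factors separately. The second factor reduces to the first by passing to opposite categories: an inspection of the definition of the two-sided Grothendieck construction yields a natural isomorphism between $\op{\parens{\mathcal{F} \otimes_\mathcal{C} \mathcal{G}}}$ and the two-sided Grothendieck construction over $\op{\mathcal{C}}$ of the functors obtained from $\mathcal{G}$ and from $\mathcal{F}$ (in that order) by post-composing with $\op{(\blank)} : \cat{\Cat} \to \cat{\Cat}$, and under this isomorphism $\id_{\mathcal{F}'} \otimes_\mathcal{C} \psi$ becomes a functor of the same shape as $\phi \otimes_\mathcal{C} \id_\mathcal{G}$ but over $\op{\mathcal{C}}$\hairspace---\hairspace namely, one that is the identity on the covariant slot. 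Since $\nv{\op{\mathcal{D}}}$ has the same geometric realization as $\nv{\mathcal{D}}$, naturally in the category $\mathcal{D}$, it therefore suffices to prove the statement for functors of the form $\phi \otimes_\mathcal{C} \id_\mathcal{G}$ with each $\phi_C$ a weak homotopy equivalence.

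\emph{Step 2: reduction to the one-sided case.} Write $\mathcal{D} = {*} \otimes_\mathcal{C} \mathcal{G}$ for the ordinary (covariant) Grothendieck construction of $\mathcal{G}$, with projection $r : \mathcal{D} \to \mathcal{C}$. A direct check shows that $\mathcal{F} \otimes_\mathcal{C} \mathcal{G}$ is naturally isomorphic to the Grothendieck construction $\parens{\mathcal{F} \circ \op{r}} \otimes_\mathcal{D} {*}$ of the composite $\mathcal{F} \circ \op{r} : \op{\mathcal{D}} \to \cat{\Cat}$\hairspace---\hairspace equivalently, the projection $\mathcal{F} \otimes_\mathcal{C} \mathcal{G} \to \mathcal{D}$ forgetting the first coordinate is a Grothendieck fibration whose fibre over $\tuple{*, C, Y}$ is $\mathcal{F} \argp{C}$\hairspace---\hairspace and that under this isomorphism $\phi \otimes_\mathcal{C} \id_\mathcal{G}$ becomes the functor induced by the natural transformation $\phi \circ \op{r}$, whose component at every object is a weak homotopy equivalence. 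So it remains to prove: given functors $\mathcal{H}', \mathcal{H} : \op{\mathcal{D}} \to \cat{\Cat}$ and a natural transformation $\theta : \mathcal{H}' \hoto \mathcal{H}$ that is a weak homotopy equivalence of categories at every object, the induced functor on Grothendieck constructions is a weak homotopy equivalence.

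\emph{Step 3: the one-sided case, and the main obstacle.} Let $p : \mathcal{E} \to \mathcal{D}$ and $p' : \mathcal{E}' \to \mathcal{D}$ be the Grothendieck fibrations attached to $\mathcal{H}$ and $\mathcal{H}'$. For each object $d$ of $\mathcal{D}$, the fibre $\inv p \set{d} = \mathcal{H} \argp{d}$ includes into the comma category whose objects are the pairs $\parens{e, g}$ with $g : d \to p \argp{e}$ a morphism of $\mathcal{D}$, as a subcategory admitting a left adjoint $\parens{e, g} \mapsto g^{*} e$ built from the cartesian lifts of $p$; hence this inclusion is a weak homotopy equivalence by \autoref{lem:homotopy.equivalences.of.categories}. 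The same holds for $\mathcal{H}'$, and $\theta$ induces a weak homotopy equivalence between the two comma categories over each $d$. The difficulty I expect to be genuine is that the transition functors $\mathcal{H} \argp{\delta}$ need not themselves be weak homotopy equivalences, so one cannot feed the fibration $p$ directly into \autoref{thm:Quillen-B}. One circumvents this via the homotopy-colimit description of the Grothendieck construction: the nerve of $\mathcal{E}$ is the homotopy colimit, over $\op{\mathcal{D}}$, of the diagram $d \mapsto \nv{\mathcal{H} \argp{d}}$ of nerves of the fibres of $p$, and homotopy colimits of diagrams of spaces carry pointwise weak equivalences to weak equivalences\hairspace---\hairspace this is the classical homotopy-invariance of the Grothendieck construction (see \citep{Dwyer-Kan:1980b}). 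Granting it, the lemma follows.
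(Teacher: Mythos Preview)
Your proposal is correct. The paper does not give an argument at all: its proof is the single sentence ``This is Corollary 9.6 in \citep{Dwyer-Kan:1980b}.'' What you have done is essentially unpack that citation\hairspace---\hairspace Dwyer and Kan derive their Corollary 9.6 from the one-sided homotopy invariance (their 9.5, the Thomason-type statement you invoke at the end of Step~3) by exactly the sort of two-step reduction you describe. Your Step~2 identification $\mathcal{F} \otimes_\mathcal{C} \mathcal{G} \cong \parens{\mathcal{F} \circ \op{r}} \otimes_\mathcal{D} {*}$ is the key manoeuvre and is correct as stated. The opening paragraph of your Step~3, where you pass to comma categories and note that the transition functors of $\mathcal{H}$ may fail to be weak equivalences, is a detour: you correctly observe that \autoref{thm:Quillen-B} does not apply directly and then abandon that line for the homotopy-colimit argument, so those sentences could simply be deleted. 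In short, you and the paper appeal to the same source; you have just written out the reduction that the paper leaves implicit in its citation.
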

\begin{proof} \openproof
This is Corollary 9.6 in \citep{Dwyer-Kan:1980b}.
\end{proof}

\section{Zigzags in relative categories}

We begin this section by introducing the main objects of study.  Recall the following pair of definitions from \citet{Barwick-Kan:2012a}:

\begin{dfn}
\ \noprelistbreak
\begin{itemize}
\item A \strong{relative category} is a pair $\mathcal{C} = \tuple{\und \mathcal{C}, \weq \mathcal{C}}$ where $\und \mathcal{C}$ is a category and $\weq \mathcal{C}$ is a (generally non-full) subcategory of $\und \mathcal{C}$ containing all the objects.

\item Given a relative category $\mathcal{C}$, a \strong{weak equivalence} in $\mathcal{C}$ is a morphism in $\weq \mathcal{C}$.

\item The \strong{homotopy category} of a relative category $\mathcal{C}$ is the category $\Ho \mathcal{C}$ obtained by freely inverting the weak equivalences in $\mathcal{C}$.

\item A relative category $\mathcal{C}$ is said to be \strong{saturated} when it satisfies the following condition: a morphism in $\mathcal{C}$ becomes invertible in $\Ho \mathcal{C}$ if and only if it is a weak equivalence in $\mathcal{C}$.
\end{itemize}
\end{dfn}

\begin{dfn}
Given relative categories $\mathcal{C}$ and $\mathcal{D}$:
\begin{itemize}
\item A \strong{relative functor} $\mathcal{C} \to \mathcal{D}$ is a functor $\und \mathcal{C} \to \und \mathcal{D}$ that restricts to a functor $\weq \mathcal{C} \to \weq \mathcal{D}$.

\item The \strong{relative functor category} $\RelFun{\mathcal{C}}{\mathcal{D}}$ is the relative category whose underlying category is the full subcategory of the ordinary functor category $\Func{\und \mathcal{C}}{\und \mathcal{D}}$ spanned by the relative functors, with the weak equivalences being the natural transformations whose components are weak equivalences in $\mathcal{D}$.
\end{itemize}
\end{dfn}
%
%\begin{remark}
%The 2-category of (small) categories admits several 2-fully faithful embeddings into the 2-category of (small) relative categories; unless otherwise stated, we will regard an ordinary category as \strong{minimal relative category} where the only weak equivalences are the identity morphisms. In particular, given an ordinary category $\mathcal{C}$ and a relative category $\mathcal{D}$, we will often tacitly identify the ordinary functor category $\Func{\mathcal{C}}{\mathcal{D}}$ with the relative functor category $\RelFun{\mathcal{C}}{\mathcal{D}}$.
%\end{remark}

Given a relative category $\mathcal{C}$, we are interested in understanding the morphisms in its homotopy category $\Ho \mathcal{C}$ in terms of the morphisms in $\mathcal{C}$ itself.  This immediately leads us to the following notions.

\begin{dfn}
\ \noprelistbreak
\begin{itemize}
\item A \strong{zigzag type} is a finite sequence of non-zero integers $\tuple{k_0, \ldots, k_n}$, where $n \ge 0$, such that for $0 \le i < n$, the sign of $k_i$ is the opposite of the sign of $k_{i+1}$.

\item Given a finite sequence of integers $\vec{k} = \tuple{k_0, \ldots, k_n}$, $\bracket{\vec{k}} = \bracket{k_0 ; \ldots ; k_n}$ is the relative category whose underlying category is freely generated by the graph
\[
\begin{tikzcd}
0 \rar[-] &
\cdots \rar[-] &
\abs{\vec{k}}
\end{tikzcd}
\]
where $\abs{\vec{k}} = \sum_{i=0}^{n} \abs{k_i}$ and (counting from the left) the first $\abs{k_0}$ arrows point rightward (\resp leftward) if $k_0 > 0$ (\resp $k_0 < 0$), the next $\abs{k_1}$ arrows point rightward (\resp leftward) if $k_1 > 0$ (\resp $k_1 < 0$), \etc, with the weak equivalences being generated by the leftward-pointing arrows.

\item A \strong{zigzag} in a relative category $\mathcal{C}$ of type $\bracket{\vec{k}}$ is a relative functor $\bracket{\vec{k}} \to \mathcal{C}$; given a zigzag, its \strong{domain} is the image of the object $0$ and its \strong{codomain} is the image of the object $\abs{\vec{k}}$.
\end{itemize}
\end{dfn}

\begin{example}
For example, $\bracket{-1 ; 2}$ denotes the relative category generated by the following graph,
\[
\begin{tikzcd}
0 \rar[leftarrow]{\simeq} &
1 \rar &
2 \rar &
3
\end{tikzcd}
\]
with $1 \to 0$ being the unique non-trivial weak equivalence.
\end{example}

\begin{remark}
For any $\bracket{k_0 ; \ldots ; k_n}$, there is a unique zigzag type $\tuple{l_0, \ldots, l_m}$ such that $\bracket{k_0 ; \ldots ; k_n} = \bracket{l_0 ; \ldots ; l_m}$. However, it is convenient to allow unnormalized notation, \eg, $\bracket{1 ; 1}$ instead of $\bracket{2}$, or $\bracket{0}$ instead of $\bracket{\thinspace}$.
\end{remark}

Any morphism in $\Ho \mathcal{C}$ is represented by a zigzag in $\mathcal{C}$, and hence one can describe the hom-sets in $\Ho \mathcal{C}$ as quotients of various sets of zigzags in $\mathcal{C}$ by the appropriate equivalence relations.  However, there is a more homotopically sensitive construction we can perform, where we instead obtain a \emph{category} of zigzags between two given objects of $\mathcal{C}$; we we will think of this as a \emph{space} of morphisms, following the philosophy laid out in \Sect 1.

\begin{dfn}
Let $X$ and $Y$ be objects in a relative category $\mathcal{C}$ and let $\vec{k}$ be a finite sequence of integers. The \strong{category of zigzags} in $\mathcal{C}$ from $X$ to $Y$ of type $\bracket{\vec{k}}$ is the category $\mathcal{C}^{\bracket{\vec{k}}} \argp{X, Y}$ defined below:
\begin{itemize}
\item The objects are the zigzags in $\mathcal{C}$ of type $\bracket{\vec{k}}$  whose domain is $X$ and whose codomain is $Y$.

\item The morphisms are commutative diagrams in $\mathcal{C}$ of the form
\[
\begin{tikzcd}
X \dar[equals] \rar[-] &
\bullet \dar[swap]{\simeq} \rar[-] &
\cdots \rar[-] &
\bullet \dar{\simeq} \rar[-] &
Y \dar[equals] \\
X \rar[-] &
\bullet \rar[-] &
\cdots \rar[-] &
\bullet \rar[-] &
Y
\end{tikzcd}
\]
where the top row is the domain, the bottom row is the codomain, and the vertical arrows are weak equivalences in $\mathcal{C}$.

\item Composition and identities are inherited from $\mathcal{C}$.
\end{itemize}

In other words, the objects (\resp morphisms) in $\mathcal{C}^{\bracket{\vec{k}}} \argp{X, Y}$ are certain hammocks of width $0$ (\resp $1$) in the sense of \citet{Dwyer-Kan:1980b}.
\end{dfn}

\begin{remark}
\label{rem:fibred.category.of.zigzags}
The following diagram is a pullback square,
\[
\begin{tikzcd}
\mathcal{C}^{\bracket{\vec{k}}} \argp{X, Y} \dar \rar[hookrightarrow] &
\weq \RelFun{\bracket{\vec{k}}}{\mathcal{C}} \dar{\prodtuple{\dom, \codom}} \\
\bracket{0} \rar[swap]{\tuple{X, Y}} &
\weq \mathcal{C} \times \weq \mathcal{C}
\end{tikzcd}
\]
where the top horizontal arrow is the evident inclusion and the bottom horizontal arrow is the functor $\bracket{0} \to \weq \mathcal{C} \times \weq \mathcal{C}$ corresponding to the object $\tuple{X, Y}$.
\end{remark}

In order to prove the main result, we will need to collect some assorted facts about these categories of zigzags, which will occupy the remainder of this section.  

\begin{prop}
\label{prop:Grothendieck.bifibration.of.zigzags}
\needspace{3\baselineskip}
Let $\mathcal{C}$ be a relative category, let $\vec{k} = \tuple{k_0, \ldots, k_n}$ be a zigzag type, and assume $k_0 < 0$ and $k_n < 0$.
\begin{enumerate}[(i)]
\item $\mathcal{C}^{\bracket{\vec{k}}} \argp{\blank, \blank}$ is (the object part of) a functor $\weq \mathcal{C} \times \weq \op{\mathcal{C}} \to \cat{\Cat}$ and there is a canonical isomorphism
\[
{*} \otimes_{\weq \mathcal{C}} \mathcal{C}^{\bracket{\vec{k}}} \otimes_{\weq \mathcal{C}} {*} \cong \weq \RelFun{\bracket{\vec{k}}}{\mathcal{C}}
\]
of categories over $\weq \mathcal{C} \times \weq \mathcal{C}$.

\item In particular, the domain projection $\dom : \weq \RelFun{\bracket{\vec{k}}}{\mathcal{C}} \to \weq \mathcal{C}$ is a split Grothendieck opfibration, and the codomain projection $\codom : \weq \RelFun{\bracket{\vec{k}}}{\mathcal{C}} \allowbreak \to \weq \mathcal{C}$ is a split Grothendieck fibration.
\end{enumerate}
\end{prop}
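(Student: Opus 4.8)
The plan is to produce the functor of part~(i) directly out of pre- and post-composition of weak equivalences, then to identify the two-sided Grothendieck construction with $\weq \RelFun{\bracket{\vec{k}}}{\mathcal{C}}$ by unwinding both descriptions, and finally to deduce part~(ii) formally from \autoref{rem:Grothendieck.constructions.and.Grothendieck.fibrations}. The hypotheses $k_0 < 0$ and $k_n < 0$ enter at the very start: they guarantee that the first arrow of every zigzag of type $\bracket{\vec{k}}$ is a weak equivalence with target the domain, and that the last arrow is a weak equivalence with source the codomain. Consequently a weak equivalence $x : X \to X'$ in $\mathcal{C}$ induces a functor $x_\ast : \mathcal{C}^{\bracket{\vec{k}}} \argp{X, Y} \to \mathcal{C}^{\bracket{\vec{k}}} \argp{X', Y}$ by postcomposing the first arrow of a zigzag with $x$, and a weak equivalence $v : Y' \to Y$ induces a functor $v^\ast : \mathcal{C}^{\bracket{\vec{k}}} \argp{X, Y} \to \mathcal{C}^{\bracket{\vec{k}}} \argp{X, Y'}$ by precomposing the last arrow with $v$. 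Since $\abs{\vec{k}} \ge 1$, the objects $0$ and $\abs{\vec{k}}$ of $\bracket{\vec{k}}$ are distinct, so $x_\ast$ and $v^\ast$ act on disjoint ends and commute on the nose, and likewise $\parens{x' \circ x}_\ast = x'_\ast \circ x_\ast$ and $\parens{v \circ v'}^\ast = v'^\ast \circ v^\ast$ by strict associativity of composition in $\mathcal{C}$; this assembles $\mathcal{C}^{\bracket{\vec{k}}} \argp{\blank, \blank}$ into a (strict) functor $\weq \mathcal{C} \times \weq \op{\mathcal{C}} \to \cat{\Cat}$.

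For the isomorphism, I would simply compute $\ast \otimes_{\weq \mathcal{C}} \mathcal{C}^{\bracket{\vec{k}}} \otimes_{\weq \mathcal{C}} \ast$ from the definition of the two-sided Grothendieck construction. Its objects are the zigzags $Z$ of type $\bracket{\vec{k}}$ (with $\dom Z$ and $\codom Z$ recorded redundantly by the two outer factors), and a morphism $Z' \to Z$ is the data of weak equivalences $x : \dom Z' \to \dom Z$ and $v : \codom Z' \to \codom Z$ together with a morphism $x_\ast Z' \to v^\ast Z$ in $\mathcal{C}^{\bracket{\vec{k}}} \argp{\dom Z, \codom Z'}$, \ie, a hammock of width $1$ with vertical weak equivalences whose first and last columns are identities. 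A morphism of $\weq \RelFun{\bracket{\vec{k}}}{\mathcal{C}}$, on the other hand, is an arbitrary hammock $h$ of width $1$ with vertical weak equivalences. The comparison functor sends $\tuple{x, v, f}$ to the composite hammock having $x$ as its first column, $v$ as its last column, and $f$ in between; conversely every $h$ decomposes uniquely in this way, with $x = h_0$, $v = h_{\abs{\vec{k}}}$, and $f$ obtained from $h$ by replacing its first and last columns with identities. The one point needing verification is that this $f$ really is a morphism of zigzags, which is exactly the naturality of $h$ at the first and last arrows of $\bracket{\vec{k}}$. The two assignments are plainly functorial, mutually inverse, and compatible with the projections to $\weq \mathcal{C} \times \weq \mathcal{C}$, which on both sides corresponds to $\prodtuple{\dom, \codom}$; this finishes part~(i).

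For part~(ii), observe that $\ast \otimes_{\weq \mathcal{C}} \mathcal{C}^{\bracket{\vec{k}}} \otimes_{\weq \mathcal{C}} \ast$ may be formed by first contracting the contravariant variable of $\mathcal{C}^{\bracket{\vec{k}}}$, which yields a covariant functor $\mathcal{C}^{\bracket{\vec{k}}} \otimes_{\weq \mathcal{C}} \ast : \weq \mathcal{C} \to \cat{\Cat}$ of which it is then the ordinary Grothendieck construction; by the dual of \autoref{rem:Grothendieck.constructions.and.Grothendieck.fibrations}, the projection along $\dom$ is a split Grothendieck opfibration. Contracting the covariant variable first instead presents it as the Grothendieck construction of the contravariant functor $\ast \otimes_{\weq \mathcal{C}} \mathcal{C}^{\bracket{\vec{k}}} : \weq \op{\mathcal{C}} \to \cat{\Cat}$, so \autoref{rem:Grothendieck.constructions.and.Grothendieck.fibrations} itself gives that the projection along $\codom$ is a split Grothendieck fibration; that the two bracketings agree is immediate from the explicit description above. (If one prefers, the cleavages can be written by hand: the cocartesian lift of $x$ at $Z$ is the hammock $Z \to x_\ast Z$ that is $x$ in the first column and the identity elsewhere, and the cartesian lift of $v$ at $Z$ is $v^\ast Z \to Z$ that is $v$ in the last column and the identity elsewhere, splitness being the nose-on functoriality of $x \mapsto x_\ast$ and $v \mapsto v^\ast$.)

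I do not anticipate a genuine obstacle here; the work is entirely bookkeeping. The two delicate points are keeping the two copies of $\weq \mathcal{C}$ and their opposite variances straight throughout, and making sure that $x_\ast$ and $v^\ast$ return zigzags of the \emph{same} type $\bracket{\vec{k}}$\hairspace---\hairspace which is precisely what the hypotheses $k_0 < 0$ and $k_n < 0$ secure.
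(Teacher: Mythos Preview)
Your proposal is correct and is precisely the direct verification the paper has in mind: the paper's own proof of (i) is the single sentence ``This can be verified directly'' and of (ii) is ``Apply \autoref{rem:Grothendieck.constructions.and.Grothendieck.fibrations}'', so you have simply written out the bookkeeping that the authors left to the reader.
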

\begin{proof}
(i). This can be verified directly. 

\bigskip\noindent
(ii). Apply \autoref{rem:Grothendieck.constructions.and.Grothendieck.fibrations}.
\end{proof}

\begin{remark}
The observation above will be the backbone of \autoref{prop:classifying.space.of.morphisms.with.homotopical.three-arrow.calculi}: the point is that $\dom : \weq \RelFun{\bracket{-1 ; 1 ; -1}}{\mathcal{C}} \to \weq \mathcal{C}$ is a Grothendieck opfibration whose fiber over an object $X$ in $\mathcal{C}$ is a category that is itself equipped with a Grothendieck fibration to $\weq \mathcal{C}$ whose fiber over an object $Y$ in $\mathcal{C}$ is the zigzag category $\mathcal{C}^{\bracket{-1 ; 1 ; -1}} \argp{X, Y}$.
\end{remark}

\begin{lem}
\label{lem:composing.weak.equivalences.in.zigzags}
Let $\mathcal{C}$ be a relative category, let $X$ and $Y$ be objects in $\mathcal{C}$, and let $\tuple{k_0, \ldots, k_{i-1}}$ and $\tuple{k_{i+1}, \ldots, k_n}$ be finite sequences of integers (possibly of length zero). Then the two evident functors
\[
s_0, s_1 : \mathcal{C}^{\bracket{k_0 ; \ldots ; k_{i-1} ; -1 ; k_{i+1} ; \ldots ; k_n}} \argp{X ; Y} \to \mathcal{C}^{\bracket{k_0 ; \ldots ; k_{i-1} ; -2 ; k_{i+1} ; \ldots ; k_n}} \argp{X, Y}
\]
defined by inserting an identity morphism and the evident functor 
\[
d : \mathcal{C}^{\bracket{k_0 ; \ldots ; k_{i-1} ; -2 ; k_{i+1} ; \ldots ; k_n}} \argp{X, Y} \to \mathcal{C}^{\bracket{k_0 ; \ldots ; k_{i-1} ; -1 ; k_{i+1} ; \ldots ; k_n}} \argp{X, Y}
\]
defined by composing the two leftward-pointing arrows are weak homotopy equivalences of categories.
\end{lem}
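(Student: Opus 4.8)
The plan is to exhibit a zigzag of natural transformations between the composites $d s_0$, $d s_1$ and the relevant identity functors, and then invoke \autoref{lem:homotopy.equivalences.of.categories}. First observe the trivial relations on the nose: $d s_0 = d s_1 = \id$ on $\mathcal{C}^{\bracket{k_0 ; \ldots ; -1 ; \ldots ; k_n}} \argp{X, Y}$, since composing the two leftward arrows in an inserted identity just returns the original zigzag. So the content is entirely in the other composites $s_0 d$ and $s_1 d$, which are endofunctors of $\mathcal{C}^{\bracket{k_0 ; \ldots ; -2 ; \ldots ; k_n}} \argp{X, Y}$. The goal is to show each of these is connected to $\id$ by a zigzag of natural transformations, for then $d$ (hence $s_0$ and $s_1$) will be a weak homotopy equivalence by the lemma, with $s_0$ and $s_1$ as homotopy inverses.

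Concretely, write a typical object of $\mathcal{C}^{\bracket{\ldots ; -2 ; \ldots}} \argp{X, Y}$ as containing a subzigzag $A \xleftarrow{\simeq w} B \xleftarrow{\simeq v} C$ at the position in question (with the rest of the zigzag fixed). The functor $d$ collapses this to $A \xleftarrow{\simeq wv} C$, and then $s_0$ (say) reinserts an identity to get $A \xleftarrow{\simeq wv} C \xleftarrow{=} C$, while $s_1$ gives $A \xleftarrow{=} A \xleftarrow{\simeq wv} C$. So $s_0 d$ sends the object with $(A, B, C; w, v)$ to the object with $(A, C, C; wv, \id)$. There is an evident morphism in the zigzag category from the original object to $s_0 d$ of it: take the vertical weak equivalence $v : B \to C$ at the middle vertex, identities elsewhere; one checks the required square commutes (the left square is $w = (wv)\circ v$ after precomposing appropriately, using that the top arrow is $w$, the bottom is $wv$, the right vertical is $v$, the left vertical is $\id_A$; the right square uses $v$ on top, $\id$ on bottom, $\id_C$ right, $v$ left). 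This assignment is natural in the object, so it gives a natural transformation $\id \hoto s_0 d$. Symmetrically, using the weak equivalence $w : B \to A$ at the middle vertex gives a natural transformation $\id \hoto s_1 d$. (One must double-check that these candidate components really are morphisms in the zigzag category — i.e. that all vertical maps are weak equivalences, which holds since $v$ and $w$ are — and that the squares away from the modified position are trivially commutative, which they are since everything there is an identity.)

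With those natural transformations in hand, \autoref{lem:homotopy.equivalences.of.categories} applied to the pair $(d, s_0)$ — using $d s_0 = \id$ and the zigzag (of length one) $\id \hoto s_0 d$ — shows both $d$ and $s_0$ are weak homotopy equivalences; applying it to $(d, s_1)$ likewise handles $s_1$. I expect the only real subtlety, and hence the main thing to get right, is the bookkeeping for the morphism-level components: verifying that the prescribed vertical maps together with identities assemble into a genuine morphism of zigzags (commuting squares at the boundary of the modified block, in particular the triangle identity $w = wv \circ v$ sitting inside one of those squares) and that this choice is strictly natural with respect to an arbitrary morphism in $\mathcal{C}^{\bracket{\ldots ; -2 ; \ldots}} \argp{X, Y}$. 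None of this is deep, but it is the part where a sign or orientation error would creep in, so I would write the squares out explicitly for the block $A \xleftarrow{} B \xleftarrow{} C$ and trust the "identities elsewhere" part to the reader.
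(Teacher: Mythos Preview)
Your approach is exactly the paper's: verify $d s_0 = d s_1 = \id$, exhibit natural transformations connecting $s_j d$ to $\id$, and invoke \autoref{lem:homotopy.equivalences.of.categories}. The paper writes out precisely the two commutative diagrams you describe (with identities away from the modified block) and draws the same conclusion.

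There is, however, the very orientation slip you warned yourself about. With your conventions $A \xleftarrow{w} B \xleftarrow{v} C$ (so $v : C \to B$), your $s_0 d$ has middle node $C$, and there is no weak equivalence $B \to C$ available; the map you have is $v : C \to B$. Consequently the natural transformation for your $s_0$ runs $s_0 d \hoto \id$, not $\id \hoto s_0 d$: put $s_0 d$ on top and the original on the bottom, with vertical $v : C \to B$ at the middle, and the left square reads $w \circ v = wv$ (not ``$w = (wv)\circ v$'', which does not typecheck). Your $s_1$ case is correct as written. This matches the paper, which also obtains the two transformations in \emph{opposite} directions: $\id \hoto s_0 d$ and $s_1 d \hoto \id$ in its labeling (your $s_0,s_1$ are the paper's $s_1,s_0$). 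Since \autoref{lem:homotopy.equivalences.of.categories} only requires a zigzag of natural transformations, the direction is immaterial and your argument goes through once this is fixed.
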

\begin{proof}
Clearly, $d \circ s_0 = d \circ s_1 = \id$; on the other hand, the commutative diagrams
\[
\begin{tikzcd}
X \dar[equals] \rar[-, squiggly] &
\bullet \dar[equals] \rar[leftarrow]{v} &
\bullet \dar[swap]{v} \rar[leftarrow]{u} &
\bullet \dar[equals] \rar[-, squiggly] &
Y \dar[equals] \\
X \rar[-, squiggly] &
\bullet \rar[equals] &
\bullet \rar[swap, leftarrow]{v \circ u} &
\bullet \rar[-, squiggly] &
Y
\end{tikzcd}
\]
\[
\begin{tikzcd}
X \dar[equals] \rar[-, squiggly] &
\bullet \dar[equals] \rar[leftarrow]{v \circ u} &
\bullet \dar{u} \rar[equals] &
\bullet \dar[equals] \rar[-, squiggly] &
Y \dar[equals] \\
X \rar[-, squiggly] &
\bullet \rar[swap, leftarrow]{v} &
\bullet \rar[swap, leftarrow]{u} &
\bullet \rar[-, squiggly] &
Y
\end{tikzcd}
\]
define (respectively) natural transformations $\id \hoto s_0 \circ d$ and $s_1 \circ d \hoto \id$, so by \autoref{lem:homotopy.equivalences.of.categories}, all three functors are indeed weak homotopy equivalences of categories.
\end{proof}

\begin{lem}
\label{lem:comparing.the.arrow.category.and.the.three-arrow.zigzag.category}
Let $\mathcal{C}$ be a relative category and let $k$ be a natural number. Then the evident functor $s^2 : \weq \RelFun{\bracket{k}}{\mathcal{C}} \to \weq \RelFun{\bracket{-1 ; k ; -1}}{\mathcal{C}}$ defined by inserting (two) identity morphisms is a weak homotopy equivalence of categories. 
%making the following diagram commute:
%\[
%\begin{tikzcd}
%\weq \RelFun{\bracket{k}}{\mathcal{C}} \dar[swap]{\prodtuple{\dom, \codom}} \rar{s^2} &
%\weq \RelFun{\bracket{-1 ; k ; -1}}{\mathcal{C}} \dar{\prodtuple{\dom, \codom}} \\
%\weq \mathcal{C} \times \weq \mathcal{C} \rar[equals] &
%\weq \mathcal{C} \times \weq \mathcal{C}
%\end{tikzcd}
%\]
\end{lem}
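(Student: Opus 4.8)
The plan is to produce an explicit retraction of $s^2$ together with a zigzag of natural transformations witnessing that the other composite is homotopic to an identity, so that \autoref{lem:homotopy.equivalences.of.categories} applies.

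First I would introduce the functor $d : \weq \RelFun{\bracket{-1 ; k ; -1}}{\mathcal{C}} \to \weq \RelFun{\bracket{k}}{\mathcal{C}}$ obtained by restricting along the evident relative functor $\bracket{k} \hookrightarrow \bracket{-1 ; k ; -1}$ that identifies $\bracket{k}$ with the run of $k$ rightward arrows in the middle. Concretely, $d$ sends a zigzag $Z_0 \xleftarrow{\simeq} Z_1 \to \cdots \to Z_{k+1} \xleftarrow{\simeq} Z_{k+2}$ to the chain $Z_1 \to \cdots \to Z_{k+1}$, discarding the two outer objects and the two outer weak equivalences. Since $s^2$ fills in exactly those two outer slots with identity morphisms, $d \circ s^2$ is the identity functor on $\weq \RelFun{\bracket{k}}{\mathcal{C}}$ on the nose.

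It remains to connect $s^2 \circ d$ to the identity on $\weq \RelFun{\bracket{-1 ; k ; -1}}{\mathcal{C}}$. A single natural transformation cannot do this, because $s^2 \circ d$ replaces the left outer object by $Z_1$ and the right outer object by $Z_{k+1}$, and, the outer arrows of $\bracket{-1 ; k ; -1}$ being the weak equivalences, the natural comparison maps at the two ends are forced to point in opposite directions. So I would pass through the intermediate functor $I$ that collapses only the \emph{left} outer arrow to an identity, sending $Z_{\bullet}$ to $Z_1 \xleftarrow{=} Z_1 \to \cdots \to Z_{k+1} \xleftarrow{\simeq} Z_{k+2}$. One then has a natural transformation $I \hoto s^2 \circ d$ whose single non-identity component is the right outer weak equivalence $Z_{k+2} \to Z_{k+1}$, and a natural transformation $I \hoto \id$ whose single non-identity component is the left outer weak equivalence $Z_1 \to Z_0$; in each case the naturality square over the relevant outer arrow commutes because one of the two arrows involved is an identity, and all components are weak equivalences, so these are genuine morphisms in $\weq \RelFun{\bracket{-1 ; k ; -1}}{\mathcal{C}}$. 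This gives a zigzag of natural transformations between $s^2 \circ d$ and the identity, and \autoref{lem:homotopy.equivalences.of.categories} then yields that $s^2$ (and $d$) are weak homotopy equivalences of categories.

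The only delicate point — and the one I would call the main obstacle, although it is ultimately routine bookkeeping — is keeping track of the orientations of the two outer weak equivalences: one must check that the two transformations out of $I$ are natural and land inside the relative functor categories rather than merely the underlying functor categories. Everything else is formal. (Equivalently, one may factor $s^2$ as the composite of the two functors each inserting a single outer identity and apply \autoref{lem:homotopy.equivalences.of.categories} to each factor in turn; this is the same argument repackaged, with $I$ serving as the intermediate category.)
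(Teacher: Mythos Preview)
Your proposal is correct and matches the paper's own proof essentially verbatim: the paper calls your retraction $r^2$ rather than $d$, and it displays exactly your intermediate functor $I$ (the zigzag with only the left outer arrow collapsed) together with the two natural transformations out of it, one to the identity and one to $s^2 \circ r^2$, before invoking \autoref{lem:homotopy.equivalences.of.categories}.
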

\begin{proof}
%It is clear that the diagram in question commutes, and it remains to be shown that $s^2$ is a weak homotopy equivalence of categories. But 
For every zigzag in $\mathcal{C}$ of type $\bracket{-1 ; k ; -1}$, say
\[
\begin{tikzcd}
X \rar[leftarrow]{v} &
\tilde{X} \rar &
\cdots \rar &
\hat{Y} \rar[leftarrow]{u} &
Y
\end{tikzcd}
\]
there is a natural commutative diagram in $\mathcal{C}$ of the form below,
\[
\begin{tikzcd}
X \dar[swap, leftarrow]{v} \rar[leftarrow]{v} &
\tilde{X} \dar[equals] \rar &
\cdots \rar &
\hat{Y} \dar[equals] \rar[leftarrow]{u} &
Y \dar[equals] \\
\tilde{X} \dar[equals] \rar[equals] &
\tilde{X} \dar[equals] \rar &
\cdots \rar &
\hat{Y} \dar[equals] \rar[leftarrow]{u} &
Y \dar{u} \\
\tilde{X} \rar[equals] &
\tilde{X} \rar &
\cdots \rar &
\hat{Y} \rar[equals] &
\hat{Y}
\end{tikzcd}
\]
so the functor $r^2 : \weq \RelFun{\bracket{-1 ; k ; -1}}{\mathcal{C}} \to \weq \RelFun{\bracket{k}}{\mathcal{C}}$ defined by discarding the two outermost arrows satisfies $r^2 \circ s^2 = \id$, and there is a zigzag of natural transformations between $\id$ and $s^2 \circ r^2$. In particular, by \autoref{lem:homotopy.equivalences.of.categories}, $s^2$ is a weak homotopy equivalence of categories.
\end{proof}

\section{The main result}

In this section we state and prove our main result\hairspace ---\hairspace namely that a saturated relative category which enjoys a certain factorization condition will have the property that its Rezk classification diagram is a complete Segal space up to Reedy-fibrant replacement.  We begin by recalling this factorization condition, which is a variation on the ``homotopy calculus of fractions'' introduced in \citep{Dwyer-Kan:1980b}.

\begin{dfn}
\label{dfn:homotopical.three-arrow.calculus}
\needspace{3\baselineskip}
A relative category $\mathcal{C}$ admits a \strong{homotopical three-arrow calculus} if it satisfies the following condition:
\begin{itemize}
\item For all natural numbers $k$ and $l$ and all objects $X$ and $Y$ in $\mathcal{C}$, the evident functor
\[
\mathcal{C}^{\bracket{-1 ; k ; l ; -1}} \argp{X, Y} \to \mathcal{C}^{\bracket{-1 ; k ; -1 ; l ; -1}} \argp{X, Y}
\]
defined by inserting an identity morphism is a weak homotopy equivalence of categories.
\end{itemize}
\end{dfn}

\begin{remark}
\label{rem:2-out-of-3.and.the.homotopical.three-arrow.calculus}
Let $\mathcal{C}$ be a relative category and let $\mathcal{W}$ be $\weq \mathcal{C}$ considered as a relative category where all morphisms are weak equivalences. Then (recalling \autoref{lem:composing.weak.equivalences.in.zigzags}) the following are equivalent:
\begin{enumerate}[(i)]
\item $\mathcal{C}$ admits a homotopy calculus of fractions in the sense of \citet{Dwyer-Kan:1980b}.

\item Both $\mathcal{C}$ and $\mathcal{W}$ admit a homotopical three-arrow calculus in the sense of \autoref{dfn:homotopical.three-arrow.calculus}.
\end{enumerate}
Moreover, if the weak equivalences in $\mathcal{C}$ have the 2-out-of-3 property, then $\mathcal{W}$ admits a homotopical three-arrow calculus if $\mathcal{C}$ does.

However, $\mathcal{C}$ may admit a homotopical three-arrow calculus even when $\mathcal{W}$ does not have the 2-out-of-3 property. For example, consider the relative category $\mathcal{C}$ whose underlying category is $\bracket{2}$ and whose weak equivalences are generated by the unique morphisms $0 \to 2$ and $1 \to 2$. Clearly, weak equivalences in $\mathcal{C}$ do not have the 2-out-of-3 property. On the other hand, for any $X$ and $Y$ in $\mathcal{C}$ and any finite sequence $\vec{k}$ (possibly of length zero), the category $\mathcal{C}^{\bracket{-1 ; \vec{k} ; -1}} \argp{X, Y}$ is a poset with a maximum element, so is contractible (by \autoref{lem:homotopy.equivalences.of.categories}). Thus, $\mathcal{C}$ indeed admits a homotopical three-arrow calculus.
\end{remark}

\begin{example}
Let $\mathcal{C}$ be a partial model category in the sense of \citet{Barwick-Kan:2011}. Then, by Proposition 8.2 in \citep{Dwyer-Kan:1980b} and \autoref{rem:2-out-of-3.and.the.homotopical.three-arrow.calculus}, $\mathcal{C}$ admits a homotopical three-arrow calculus. 
\end{example}

\begin{example}
\needspace{3\baselineskip}
Let $\mathcal{M}$ be a model category and let $\mathcal{C}$ be a full subcategory of $\mathcal{M}$. Suppose $\mathcal{C}$ is homotopically replete, \ie, satisfies the following condition:
\begin{itemize}
\item For any weak equivalence $w : X \to Y$ in $\mathcal{M}$, if either $X$ or $Y$ is in $\mathcal{C}$, then $X$, $Y$, and $w$ are all in $\mathcal{C}$.
\end{itemize}
Then $\mathcal{C}$ admits a homotopical three-arrow calculus. Essentially, one examines the argument of paragraph 8.1 in \citep{Dwyer-Kan:1980c} and notes it goes through with $\mathcal{C}$ in place of $\mathcal{M}$. (Observe that the condition on $\mathcal{C}$ ensures that it is closed in $\mathcal{M}$ under pullbacks along trivial fibrations, the construction of simplicial resolutions, \etc.) In particular, $\mathcal{M}$ admits a homotopical three-arrow calculus.
\end{example}

We should think of a homotopical three-arrow calculus as a guarantee that we can reduce our zigzags in $\mathcal{C}$ from longer to shorter: the exact condition says that up to a certain suitable notion of equivalence, we can remove the middle weak equivalence in a zigzag of type $\bracket{-1 ; k ; -1 ; l ; -1}$ to obtain a zigzag of type $\bracket{-1 ; k ; l ; -1}$, and moreover that this reduction is sensitive to the homotopical information contained in the categories of zigzags involved.  In fact, the presence of a homotopical three-arrow calculus allows us to reduce \emph{all} zigzags in $\mathcal{C}$ in this homotopically sensitive way to the smallest sort that we might hope. (Recall the discussion in the introduction!) More precisely, there is the following theorem:

\begin{thm}[Dwyer and Kan]
\label{thm:fundemental.theorem.of.homotopical.three-arrow.calculi}
\needspace{3\baselineskip}
Let $\mathcal{C}$ be a relative category and let $\ul{\LH \mathcal{C}}$ be the hammock localization. 
\begin{enumerate}[(i)]
\item If $\mathcal{C}$ admits a homotopical three-arrow calculus, then the reduction map
\[
\nv{\mathcal{C}^{\bracket{-1 ; 1 ; -1}} \argp{X, Y}} \to \ulHom[\LH \mathcal{C}]{X}{Y}
\]
is a weak homotopy equivalence of simplicial sets.

\item The reduction map $\nv{\mathcal{C}^{\bracket{-1 ; 1 ; -1}} \argp{X, Y}} \to \ulHom[\LH \mathcal{C}]{X}{Y}$ is natural in the following sense: given weak equivalences $X \to X'$ and $Y' \to Y$ in $\mathcal{C}$, the following diagram commutes in $\cat{\SSet}$,
\[
\begin{tikzcd}
\nv{\mathcal{C}^{\bracket{-1 ; 1 ; -1}} \argp{X, Y}} \dar \rar &
\ulHom[\LH \mathcal{C}]{X}{Y} \dar \\
\nv{\mathcal{C}^{\bracket{-1 ; 1 ; -1}} \argp{X', Y'}} \rar &
\ulHom[\LH \mathcal{C}]{X'}{Y'}
\end{tikzcd}
\]
where the vertical arrow on the left is defined by composition and the vertical arrow on the right is defined by concatenation.
\end{enumerate}
\end{thm}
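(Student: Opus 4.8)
The plan is to treat the statement as (essentially) the principal computation of \citet{Dwyer-Kan:1980b}, obtained under the \emph{a priori} weaker hypothesis that only $\mathcal{C}$ (and not also $\mathcal{W} = \weq \mathcal{C}$) admits a three-arrow calculus (compare \autoref{rem:2-out-of-3.and.the.homotopical.three-arrow.calculus}); the task is therefore to re-run Dwyer and Kan's hammock-reduction argument while keeping careful track of which zigzag categories are compared at each stage, so that the hypothesis on $\mathcal{C}$ alone is seen to suffice. Recall first the bridge between the two sides of the statement: an $m$-simplex of $\ulHom[\LH \mathcal{C}]{X}{Y}$ is a reduced hammock of width $m$ from $X$ to $Y$, while a hammock of a fixed zigzag type $\bracket{\vec{k}}$ from $X$ to $Y$ of width $m$ (not necessarily reduced) is exactly an $m$-simplex of $\nv{\mathcal{C}^{\bracket{\vec{k}}} \argp{X, Y}}$; thus $\ulHom[\LH \mathcal{C}]{X}{Y}$ is assembled from the simplicial sets $\nv{\mathcal{C}^{\bracket{\vec{k}}} \argp{X, Y}}$, glued along the maps induced by inserting identity columns and by collapsing all-identity columns and composing adjacent same-direction columns. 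Under this description, the reduction map of part (i) sends a hammock of type $\bracket{-1 ; 1 ; -1}$ to its reduction.

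For part (i) I would exhibit a filtration of $\ulHom[\LH \mathcal{C}]{X}{Y}$, indexed by the combinatorial complexity of zigzag types, whose bottom stage is the image under reduction of $\nv{\mathcal{C}^{\bracket{-1 ; 1 ; -1}} \argp{X, Y}}$, and show that each inclusion of one stage into the next is a weak homotopy equivalence. Passing from the stage governed by a reduced zigzag type to the next reduces---after padding the type on both ends so that it begins and ends with a backward arrow, using \autoref{lem:comparing.the.arrow.category.and.the.three-arrow.zigzag.category}, and normalizing runs of consecutive weak equivalences, using \autoref{lem:composing.weak.equivalences.in.zigzags}---to the comparison between $\mathcal{C}^{\bracket{-1 ; k ; -1 ; l ; -1}} \argp{X, Y}$ and $\mathcal{C}^{\bracket{-1 ; k ; l ; -1}} \argp{X, Y}$ for suitable natural numbers $k$ and $l$ (possibly zero, which is why \autoref{dfn:homotopical.three-arrow.calculus} allows $k, l = 0$); and that comparison is a weak homotopy equivalence by hypothesis. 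Since the transition maps in the filtration are monomorphisms of simplicial sets, the colimit is a homotopy colimit, and iterating the reductions collapses every stage onto the bottom one; hence the reduction map is a weak homotopy equivalence. Each elementary step invokes only the homotopical three-arrow calculus for $\mathcal{C}$ together with \autoref{lem:composing.weak.equivalences.in.zigzags} and \autoref{lem:comparing.the.arrow.category.and.the.three-arrow.zigzag.category}, never a condition on $\mathcal{W}$.

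Part (ii) is then immediate from the construction. Given weak equivalences $X \to X'$ and $Y' \to Y$, the left-hand vertical map prepends and appends these (read as backward arrows) to a zigzag of type $\bracket{-1 ; 1 ; -1}$ and composes the resulting adjacent weak equivalences, while the right-hand vertical map concatenates the corresponding length-one hammocks and reduces; since reduction never alters the two outermost columns, the two composites agree already at the level of hammocks, before passing to nerves. Alternatively one may simply cite the relevant naturality statement in \opcit.

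The step I expect to be the main obstacle is the bookkeeping underlying part (i): one must verify that $\ulHom[\LH \mathcal{C}]{X}{Y}$ really does admit a filtration of the asserted shape---so that successive stages differ, up to the comparisons above, by gluing in a single new zigzag category---and that the combinatorics of reducing an arbitrary reduced zigzag type down to $\bracket{-1 ; 1 ; -1}$ goes through using only \autoref{lem:composing.weak.equivalences.in.zigzags}, \autoref{lem:comparing.the.arrow.category.and.the.three-arrow.zigzag.category}, and the three-arrow calculus for $\mathcal{C}$; in particular one must never need to compose two adjacent forward arrows in a homotopically sensitive way except as licensed by \autoref{dfn:homotopical.three-arrow.calculus}. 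This is precisely the content of the reduction lemmas of \citep[\Sect 6]{Dwyer-Kan:1980b}, which I would re-run rather than re-derive from scratch.
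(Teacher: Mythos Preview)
Your proposal is correct and takes essentially the same approach as the paper: the paper's proof simply cites Proposition~6.2 of \citep{Dwyer-Kan:1980b} and observes that the second half of the homotopy calculus of fractions (the condition on $\mathcal{W}$) is never invoked, while for (ii) it says ``immediate, from the definitions.'' Your sketch is a more expanded version of the same deferral to Dwyer--Kan's \Sect 6, with the same key observation about which hypothesis is actually used; note, though, that Dwyer--Kan's actual argument proceeds via an endofunctor trick on the indexing category $\mathbf{II}$ (for which the paper records some corrections in the remark following the theorem) rather than a literal filtration, and that \autoref{lem:comparing.the.arrow.category.and.the.three-arrow.zigzag.category} concerns the global categories $\weq\RelFun{\bracket{\vec{k}}}{\mathcal{C}}$ rather than the fixed-endpoint ones, so it does not enter here.
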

\begin{proof} \openproof
(i). This is Proposition 6.2 in \citep{Dwyer-Kan:1980b}. Note that the second half of the `homotopy calculus of fractions' condition is not used, so it does indeed suffice to have a homotopical three-arrow calculus.

\bigskip\noindent
(ii). Immediate, from the definitions.
\end{proof}

\begin{remark}
Statement (ii) above does not appear explicitly in \citep{Dwyer-Kan:1980b}, but we will need it later. We also note the following corrections to the proof of (i) given in \opcit:
\begin{itemize}
\item The functor $B : \mathbf{II} \to \mathbf{II}$ should instead be given by the following formula:
\[
\tuple{S, T} \mapsto \tuple{\set{s + 1}{s \in S}, \set{1} \cup \set{t + 1}{t \in T}}
\]
 
\item The formula given for $A$ does not define a functor on the whole of $\mathbf{II}$; instead, define $B A$ to be the functor given by the following formula:
\[
\tuple{S, T} \mapsto \tuple{\set{ 2, \ldots, \card{S} + 1 }, \set{ 1 }}
\]

\item In the last line, `5.1 (ii)' should be `6.1 (ii)'.
\end{itemize}
\end{remark}

\begin{cor}
\label{cor:action.of.weak.equivalences.on.three-arrow.zigzags}
Let $\mathcal{C}$ be a relative category. If $\mathcal{C}$ admits a homotopical three-arrow calculus, then for any weak equivalences $X \to X'$ and $Y' \to Y$ in $\mathcal{C}$, the induced functor
\[
\mathcal{C}^{\bracket{-1 ; 1 ; -1}} \argp{X, Y} \to \mathcal{C}^{\bracket{-1 ; 1 ; -1}} \argp{X', Y'}
\]
is a weak homotopy equivalence of categories.
\end{cor}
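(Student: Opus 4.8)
The plan is to reduce the statement to \autoref{thm:fundemental.theorem.of.homotopical.three-arrow.calculi} together with the basic fact that weak equivalences become homotopy equivalences in the hammock localization. The functor under consideration sends a zigzag of type $\bracket{-1 ; 1 ; -1}$ from $X$ to $Y$ to the zigzag of type $\bracket{-1 ; 1 ; -1}$ from $X'$ to $Y'$ obtained by composing the two outermost (weak-equivalence) arrows with $X \to X'$ and $Y' \to Y$ respectively; this is exactly the functor inducing the left-hand vertical map in part~(ii) of \autoref{thm:fundemental.theorem.of.homotopical.three-arrow.calculi}. Applying the nerve and combining parts~(i) and~(ii) of that theorem, I obtain a commutative square in $\cat{\SSet}$
\[
\begin{tikzcd}
\nv{\mathcal{C}^{\bracket{-1 ; 1 ; -1}} \argp{X, Y}} \dar \rar &
\ulHom[\LH \mathcal{C}]{X}{Y} \dar \\
\nv{\mathcal{C}^{\bracket{-1 ; 1 ; -1}} \argp{X', Y'}} \rar &
\ulHom[\LH \mathcal{C}]{X'}{Y'}
\end{tikzcd}
\]
in which, by part~(i) and the standing hypothesis, both horizontal arrows are weak homotopy equivalences, the left vertical arrow is the nerve of the functor of interest, and the right vertical arrow is the map induced by concatenating hammocks with the zigzags determined by $X \to X'$ and $Y' \to Y$.

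By the two-out-of-three property of weak homotopy equivalences of simplicial sets, it now suffices to show that the right vertical arrow is a weak homotopy equivalence. For this I would use that a weak equivalence $w$ in $\mathcal{C}$ and its formal backward copy are mutually inverse up to homotopy in $\LH \mathcal{C}$\hairspace ---\hairspace a standard observation of Dwyer and Kan, witnessed by an evident width-one hammock\hairspace ---\hairspace so that $X \to X'$ and $Y' \to Y$ represent homotopy equivalences in $\LH \mathcal{C}$. Since $\LH \mathcal{C}$ is simplicially enriched, concatenation (\ie composition) with morphisms admitting homotopy inverses induces homotopy equivalences on hom-spaces; hence the right vertical arrow is a homotopy equivalence of simplicial sets, in particular a weak homotopy equivalence.

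Putting these together, the left vertical arrow is a weak homotopy equivalence of simplicial sets, which by definition means that the induced functor $\mathcal{C}^{\bracket{-1 ; 1 ; -1}} \argp{X, Y} \to \mathcal{C}^{\bracket{-1 ; 1 ; -1}} \argp{X', Y'}$ is a weak homotopy equivalence of categories. The only ingredient here that goes beyond pure bookkeeping with the square and two-out-of-three is the claim that weak equivalences act invertibly up to homotopy on the hom-spaces of $\LH \mathcal{C}$; that is the step I would be most careful to pin down, with a precise reference to \citep{Dwyer-Kan:1980b}, everything else being formal.
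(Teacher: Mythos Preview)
Your argument is correct and is exactly the paper's approach: form the naturality square of \autoref{thm:fundemental.theorem.of.homotopical.three-arrow.calculi}, use part~(i) for the horizontals, and invoke the Dwyer--Kan result that weak equivalences act by weak homotopy equivalences on hammock hom-spaces for the right vertical, then apply 2-out-of-3. The precise reference you anticipate needing is Proposition~3.3 of \citep{Dwyer-Kan:1980b}, which is exactly what the paper cites.
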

\begin{proof} \openproof
Use naturality (as in \autoref{thm:fundemental.theorem.of.homotopical.three-arrow.calculi}) and Proposition 3.3 in \citep{Dwyer-Kan:1980b}.
\end{proof}

We are now ready to compute (with assumptions) the homotopy fibers of the functor $\prodtuple{\dom, \codom} : \weq \Func{\bracket{1}}{\mathcal{C}} \to \mathcal{C} \times \mathcal{C}$, or, in other words, the hom-spaces of the Rezk classification diagram of $\mathcal{C}$.

\begin{prop}
\label{prop:classifying.space.of.morphisms.with.homotopical.three-arrow.calculi}
\needspace{2.5\baselineskip}
Let $\mathcal{C}$ be a relative category and let $\mathcal{W} = \weq \mathcal{C}$.
\begin{enumerate}[(i)]
\item There is a pullback diagram in $\cat{\Cat}$ of the form below,
\[
\begin{tikzcd}[column sep=7.5ex]
\weq \RelFun{\bracket{-1 ; 1 ; -1}}{\mathcal{C}} \dar \rar &
\weq \RelFun{\bracket{1}}{\mathcal{C}} \dar{\prodtuple{\dom, \codom}} \\
\Func{\bracket{1}}{\mathcal{W}} \times \Func{\bracket{1}}{\mathcal{W}} \rar[swap]{\codom \times \dom} &
\mathcal{W} \times \mathcal{W}
\end{tikzcd}
\]
and, moreover, the horizontal arrows in the diagram are weak homotopy equivalences of categories.

\item For each pair $\tuple{X, Y}$ of objects in $\mathcal{C}$, we have the following pullback diagram in $\cat{\Cat}$,
\[
\begin{tikzcd}
\mathcal{C}^{\bracket{-1 ; 1 ; -1}} \argp{X, Y} \dar \rar &
\weq \RelFun{\bracket{-1 ; 1 ; -1}}{\mathcal{C}} \dar{\prodtuple{\dom, \codom}} \\
\bracket{0} \rar[swap]{\tuple{X, Y}} &
\mathcal{W} \times \mathcal{W}
\end{tikzcd}
\]
where $\tuple{X, Y} : \bracket{0} \to \mathcal{W} \times \mathcal{W}$ is the functor corresponding to the object $\tuple{X, Y}$ in $\mathcal{W} \times \mathcal{W}$.

\item If $\mathcal{C}$ admits a homotopical three-arrow calculus, then we have a homotopy pullback diagram in $\cat{\Cat}$ of the form below,
\[
\begin{tikzcd}
\mathcal{C}^{\bracket{-1 ; 1 ; -1}} \argp{X, Y} \dar \rar &
\weq \RelFun{\bracket{1}}{\mathcal{C}} \dar{\prodtuple{\dom, \codom}} \\
\overcat{\mathcal{W}}{X} \times \undercat{Y}{\mathcal{W}} \rar &
\mathcal{W} \times \mathcal{W}
\end{tikzcd}
\]
where $\overcat{\mathcal{W}}{X}$ (\resp $\undercat{Y}{\mathcal{W}}$) is the slice (\resp coslice) category and the bottom horizontal arrow is defined by the evident projections.
\end{enumerate}
\end{prop}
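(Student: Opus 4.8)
The plan is to deduce (iii) from (i) and (ii) together with Quillen's Theorem B (\autoref{thm:Quillen-B}); parts (i) and (ii) themselves are essentially bookkeeping.

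For (i), I would unwind the definitions: a zigzag of type $\bracket{-1;1;-1}$ amounts to a single morphism of $\mathcal{C}$ (its middle arrow, an object of $\weq \RelFun{\bracket{1}}{\mathcal{C}}$) together with a compatible pair consisting of a weak equivalence attached at its source and one attached at its target (the two outer arrows, an object of $\Func{\bracket 1}{\mathcal{W}} \times \Func{\bracket 1}{\mathcal{W}}$), and a morphism of zigzags amounts to the same data one level up; this is exactly the asserted pullback square. The top horizontal arrow is the functor $r^2$ of \autoref{lem:comparing.the.arrow.category.and.the.three-arrow.zigzag.category} (with $k = 1$) that discards the two outer arrows, hence a weak homotopy equivalence by that lemma. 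The bottom horizontal arrow is, on each factor, a projection $\Func{\bracket 1}{\mathcal{W}} \to \mathcal{W}$ picking out an endpoint; this has the fully faithful section $W \mapsto \id_W$ as an adjoint, so it is a weak homotopy equivalence by \autoref{lem:homotopy.equivalences.of.categories}, and a finite product of weak homotopy equivalences of categories is one. Part (ii) is just \autoref{rem:fibred.category.of.zigzags} specialized to $\vec k = \tuple{-1, 1, -1}$, which one checks directly.

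For (iii), first observe that, unwinding the definitions, the displayed square is in fact a \emph{strict} pullback in $\cat{\Cat}$ --- its objects are precisely the zigzags of type $\bracket{-1;1;-1}$ from $X$ to $Y$, and likewise for morphisms --- so the content is that this strict pullback is also a \emph{homotopy} pullback, and this is where the homotopical three-arrow calculus enters. By \autoref{prop:Grothendieck.bifibration.of.zigzags} and the remark after it, $\dom : \weq \RelFun{\bracket{-1;1;-1}}{\mathcal{C}} \to \mathcal{W}$ is a split Grothendieck opfibration whose fiber $\mathcal{Z}_X$ over $X$ carries a split Grothendieck fibration $\codom : \mathcal{Z}_X \to \mathcal{W}$ with fiber $\mathcal{C}^{\bracket{-1;1;-1}} \argp{X, Y}$ over $Y$. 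The transition functors of $\codom : \mathcal{Z}_X \to \mathcal{W}$ are of the form $\mathcal{C}^{\bracket{-1;1;-1}} \argp{X, Y} \to \mathcal{C}^{\bracket{-1;1;-1}} \argp{X, Y'}$, hence weak homotopy equivalences by \autoref{cor:action.of.weak.equivalences.on.three-arrow.zigzags} (here the hypothesis is used); and the transition functors of the opfibration $\dom$ are, under the identification $\mathcal{Z}_X \cong \mathcal{C}^{\bracket{-1;1;-1}} \argp{X, \blank} \otimes_{\mathcal{W}} {*}$ coming from (i), induced by the natural transformations $\mathcal{C}^{\bracket{-1;1;-1}} \argp{X, \blank} \hoto \mathcal{C}^{\bracket{-1;1;-1}} \argp{X', \blank}$ whose components are weak homotopy equivalences (again by the corollary), so they too are weak homotopy equivalences, this time by \autoref{lem:two-sided.Grothendieck.construction.is.homotopical}. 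Two applications of \autoref{thm:Quillen-B} (first to $\dom$, then to $\codom$ on the fiber) now show, using (ii), that $\mathcal{C}^{\bracket{-1;1;-1}} \argp{X, Y}$ is the homotopy fiber of $\prodtuple{\dom, \codom} : \weq \RelFun{\bracket{-1;1;-1}}{\mathcal{C}} \to \mathcal{W} \times \mathcal{W}$ over $\tuple{X, Y}$.

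It remains to transport this statement along (i). Since the square of (i) is a strict pullback both of whose horizontal arrows are weak homotopy equivalences, it is a homotopy pullback: the homotopy pullback of the remaining cospan is weakly equivalent to $\weq \RelFun{\bracket 1}{\mathcal{C}}$ because its bottom arrow is a weak equivalence, and $r^2$ factors through it by a map that is then a weak equivalence by two-out-of-three. Pasting this homotopy pullback onto the one from the previous paragraph, identifying $\overcat{\mathcal{W}}{X} \times \undercat{Y}{\mathcal{W}}$ as the fiber of $\codom \times \dom$ over $\tuple{X, Y}$, and noting that its evident projection to $\mathcal{W} \times \mathcal{W}$ is connected to the constant functor at $\tuple{X, Y}$ by natural transformations with componentwise weak equivalences (so may be replaced by it, since $\overcat{\mathcal{W}}{X} \times \undercat{Y}{\mathcal{W}}$ is contractible), we obtain the homotopy pullback square of (iii). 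I expect the main obstacle to lie entirely in this transport: one must keep careful track of how the various projections to $\mathcal{W} \times \mathcal{W}$ match under the identifications, and verify that swapping the evident projection for the constant functor does not disturb the homotopy-pullback property --- everything else is either direct verification or a clean invocation of \autoref{thm:Quillen-B}.
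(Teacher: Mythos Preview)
Your proof is correct and follows essentially the same strategy as the paper: parts (i) and (ii) are handled identically, and part (iii) is reduced via pasting to the statement that the square in (ii) is a \emph{homotopy} pullback, which is then verified by applying \autoref{thm:Quillen-B} to the Grothendieck (op)fibration structure of \autoref{prop:Grothendieck.bifibration.of.zigzags}, with the fiber-transition functors being weak equivalences by \autoref{cor:action.of.weak.equivalences.on.three-arrow.zigzags} and \autoref{lem:two-sided.Grothendieck.construction.is.homotopical}.

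Two places where the paper is more explicit than your sketch deserve comment. First, your sentence ``two applications of \autoref{thm:Quillen-B} \ldots\ show that $\mathcal{C}^{[-1;1;-1]}(X,Y)$ is the homotopy fiber of $\langle\dom,\codom\rangle$ over $(X,Y)$'' hides a small step: the two applications give that the strict fiber over $X$ (of $\dom$) and then over $Y$ (of $\codom$ restricted to that fiber) are homotopy fibers, but to conclude the same for the fiber over $(X,Y)$ of the map to the \emph{product} $\mathcal{W}\times\mathcal{W}$ one needs that the projection $\mathcal{W}\times\mathcal{W}\to\mathcal{W}$ exhibits $\mathcal{W}$ as a homotopy fiber. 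The paper makes this explicit with a three-square diagram and \autoref{cor:binary.products.are.homotopy.products}. Second, your ``transport along (i)'' is exactly the pasting argument the paper carries out, but the paper organizes it by drawing a single diagram with three squares (A), (B), (C) stacked so that (AB) is the target square of (iii), (AC) is the square of (ii), and the intermediate object $\Func{[1]}{\mathcal{W}}\times\Func{[1]}{\mathcal{W}}$ carries \emph{two different} projections to $\mathcal{W}\times\mathcal{W}$ (one for (B), one for (C)). This bookkeeping is precisely the ``matching of projections'' you flag as the main obstacle; the paper's layout resolves it cleanly without needing to swap the slice/coslice projection for the constant functor. (Incidentally, the identification $\mathcal{Z}_X \cong \mathcal{C}^{[-1;1;-1]}(X,-)\otimes_{\mathcal{W}} *$ comes from \autoref{prop:Grothendieck.bifibration.of.zigzags}, not from (i).)
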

\begin{proof}
(i). It is clear that we have a pullback diagram of the required form, the top horizontal arrow is a weak homotopy equivalence by \autoref{lem:comparing.the.arrow.category.and.the.three-arrow.zigzag.category}, and a similar argument shows that the bottom horizontal arrow is a weak homotopy equivalence as well.

\bigskip\noindent
(ii). This is \autoref{rem:fibred.category.of.zigzags} in the case $\bracket{\vec{k}} = \bracket{-1 ; 1 ; -1}$.

\bigskip\noindent
(iii). Consider the following commutative diagram in $\cat{\Cat}$,
\[
\begin{tikzcd}[column sep=7.5ex]
\mathcal{C}^{\bracket{-1 ; 1 ; -1}} \argp{X, Y} \dar \rar[hookrightarrow] 
\drar[start anchor=center, end anchor=center, draw=none][font=\normalsize, description]{\textup{(A)}} &
\weq \RelFun{\bracket{-1 ; 1 ; -1}}{\mathcal{C}} \dar \rar
\drar[start anchor=center, end anchor=center, draw=none][font=\normalsize, description]{\textup{(B)}} &
\weq \RelFun{\bracket{1}}{\mathcal{C}} \dar{\prodtuple{\dom, \codom}} \\
\overcat{\mathcal{W}}{X} \times \undercat{Y}{\mathcal{W}} \dar \rar[hookrightarrow] 
\drar[start anchor=center, end anchor=center, draw=none][font=\normalsize, description]{\textup{(C)}} &
\Func{\bracket{1}}{\mathcal{W}} \times \Func{\bracket{1}}{\mathcal{W}} \dar{\codom \times \dom} \rar[swap]{\dom \times \codom} &
\mathcal{W} \times \mathcal{W} \\
\bracket{0} \rar[swap]{\tuple{X, Y}} &
\mathcal{W} \times \mathcal{W}
\end{tikzcd}
\]
where every square is a pullback diagram. We wish to prove that rectangle (AB) is a homotopy pullback diagram, and since (B) is a homotopy pullback diagram, it suffices (by the homotopy pullback pasting lemma) to verify that (A) is a homotopy pullback diagram; however (by \autoref{lem:homotopy.equivalences.of.categories}) the vertical arrows in (C) are also weak homotopy equivalences, so (C) is a homotopy pullback diagram, and hence it is enough to show that the rectangle (AC) is a homotopy pullback diagram.

Let $\mathcal{H}_X : \op{\mathcal{W}} \to \cat{\Cat}$ be the diagram $\mathcal{C}^{\bracket{-1 ; 1 ; -1}} \argp{X, \blank}$. Then by \autoref{thm:Quillen-B}, \autoref{lem:two-sided.Grothendieck.construction.is.homotopical}, \autoref{prop:Grothendieck.bifibration.of.zigzags}, and \autoref{cor:action.of.weak.equivalences.on.three-arrow.zigzags}, the pullback diagrams
\[
\mmhfill
\begin{tikzcd}
\mathcal{H}_X \otimes_\mathcal{W} {*} \dar \rar &
\weq \RelFun{\bracket{-1 ; 1 ; -1}}{\mathcal{C}} \dar{\dom} \\
\bracket{0} \rar[swap]{X} &
\mathcal{W}
\end{tikzcd}
\mmhfill
\begin{tikzcd}
\mathcal{C}^{\bracket{-1 ; 1 ; -1}} \argp{X, Y} \dar \rar &
\mathcal{H}_X \otimes_\mathcal{W} {*} \dar \\
\bracket{0} \rar[swap]{Y} &
\mathcal{W}
\end{tikzcd}
\mmhfill
\]
are homotopy pullback diagrams. Thus, in the diagram shown below,
\[
\begin{tikzcd}
\mathcal{C}^{\bracket{-1 ; 1 ; -1}} \argp{X, Y} \dar \rar 
\drar[start anchor=center, end anchor=center, draw=none][font=\normalsize, description]{\textup{(D)}} &
\mathcal{H}_X \otimes_\mathcal{W} {*} \dar \rar 
\drar[start anchor=center, end anchor=center, draw=none][font=\normalsize, description]{\textup{(E)}} &
\weq \RelFun{\bracket{-1 ; 1 ; -1}}{\mathcal{C}} \dar{\prodtuple{\dom, \codom}} \\
\bracket{0} \rar[swap]{Y} &
\mathcal{W} \dar \rar[swap]{X \times \id_\mathcal{W}} 
\drar[start anchor=center, end anchor=center, draw=none][font=\normalsize, description]{\textup{(F)}} &
\mathcal{W} \times \mathcal{W} \dar{\mathrm{pr}_1} \\
&
\bracket{0} \rar[swap]{X} &
\mathcal{W}
\end{tikzcd}
\]
we know that (D) and (EF) are homotopy pullback diagrams; however \autoref{cor:binary.products.are.homotopy.products} says that the evident pullback diagram
\[
\begin{tikzcd}
\mathcal{W} \times \mathcal{W} \dar[swap]{\mathrm{pr}_1} \rar{\mathrm{pr}_2} &
\mathcal{W} \dar \\
\mathcal{W} \rar &
\bracket{0}
\end{tikzcd}
\]
is a homotopy pullback diagram, so (F) and (E) are also homotopy pullback diagrams. In particular,
\[
\begin{tikzcd}
\mathcal{C}^{\bracket{-1 ; 1 ; -1}} \argp{X, Y} \dar \rar[hookrightarrow] &
\weq \RelFun{\bracket{-1 ; 1 ; -1}}{\mathcal{C}} \dar{\prodtuple{\dom, \codom}} \\
\bracket{0} \rar[swap]{\tuple{X, Y}} &
\mathcal{W} \times \mathcal{W}
\end{tikzcd}
\]
is a homotopy pullback diagram, as required.
\end{proof}

We will also need a technical result concerning categories of longer zigzags in $\mathcal{C}$ and their behavior with respect to replacing the domain and codomain along weak equivalences.

\begin{lem}
\label{lem:action.of.weak.equivalences.on.extended.three-arrow.zigzags}
Let $\mathcal{C}$ be a relative category and let $n$ be a positive integer. If $\mathcal{C}$ admits a homotopical three-arrow calculus, then for any weak equivalences $X \to X'$ and $Y' \to Y$ in $\mathcal{C}$, the induced functor
\[
\mathcal{C}^{\bracket{-1 ; n ; -1}} \argp{X, Y} \to \mathcal{C}^{\bracket{-1 ; n ; -1}} \argp{X', Y'}
\]
is a weak homotopy equivalence of categories.
\end{lem}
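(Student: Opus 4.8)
The plan is to bootstrap from the case $n = 1$, which is exactly \autoref{cor:action.of.weak.equivalences.on.three-arrow.zigzags}; so we may assume $n \ge 2$. It suffices to treat the two weak equivalences one at a time, say replacing $X$ by $X'$ first and then $Y'$ by $Y$, and we begin with the former: we must show that a weak equivalence $X \to X'$ induces a weak homotopy equivalence $\mathcal{C}^{\bracket{-1 ; n ; -1}} \argp{X, Y} \to \mathcal{C}^{\bracket{-1 ; n ; -1}} \argp{X', Y}$.

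First I would apply the homotopical three-arrow calculus (\autoref{dfn:homotopical.three-arrow.calculus}) with $k = 1$ and $l = n - 1$: regarding $\bracket{-1 ; 1 ; n - 1 ; -1}$ as $\bracket{-1 ; n ; -1}$, this gives a weak homotopy equivalence $\mathcal{C}^{\bracket{-1 ; n ; -1}} \argp{X, Y} \to \mathcal{C}^{\bracket{-1 ; 1 ; -1 ; n - 1 ; -1}} \argp{X, Y}$, and since the inserted identity morphism does not involve the arrow abutting the domain, this weak equivalence is natural with respect to replacing $X$ by $X'$. Hence it suffices to show that the induced functor $\mathcal{C}^{\bracket{-1 ; 1 ; -1 ; n - 1 ; -1}} \argp{X, Y} \to \mathcal{C}^{\bracket{-1 ; 1 ; -1 ; n - 1 ; -1}} \argp{X', Y}$ is a weak homotopy equivalence. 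Now a zigzag of type $\bracket{-1 ; 1 ; -1 ; n - 1 ; -1}$ is the concatenation of a zigzag of type $\bracket{-1 ; 1 ; -1}$ with a zigzag of type $\bracket{n - 1 ; -1}$, glued along the vertex separating the two; since the comparison morphism at that (interior) vertex is a weak equivalence in any morphism of zigzags, this yields a canonical isomorphism over $\weq \mathcal{C}$,
\[
\mathcal{C}^{\bracket{-1 ; 1 ; -1 ; n - 1 ; -1}} \argp{X, Y} \cong \mathcal{H}_{X, Y} \otimes_{\weq \mathcal{C}} {*} ,
\]
where $\mathcal{H}_{X, Y} : \weq \op{\mathcal{C}} \to \cat{\Cat}$ sends $V$ to $\mathcal{C}^{\bracket{-1 ; 1 ; -1}} \argp{X, V} \times \mathcal{C}^{\bracket{n - 1 ; -1}} \argp{V, Y}$; that this is a well-defined functor and that the isomorphism holds can be verified directly, exactly as in \autoref{prop:Grothendieck.bifibration.of.zigzags}. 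Under this identification, replacing $X$ by $X'$ is induced by the natural transformation $\mathcal{H}_{X, Y} \hoto \mathcal{H}_{X', Y}$ whose component at $V$ is the product of the functor $\mathcal{C}^{\bracket{-1 ; 1 ; -1}} \argp{X, V} \to \mathcal{C}^{\bracket{-1 ; 1 ; -1}} \argp{X', V}$ with the identity on $\mathcal{C}^{\bracket{n - 1 ; -1}} \argp{V, Y}$.

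By \autoref{lem:two-sided.Grothendieck.construction.is.homotopical} (applied with the left-hand factor constant), it then suffices to check that each of these components is a weak homotopy equivalence of categories. But $\mathcal{C}^{\bracket{-1 ; 1 ; -1}} \argp{X, V} \to \mathcal{C}^{\bracket{-1 ; 1 ; -1}} \argp{X', V}$ is a weak homotopy equivalence by \autoref{cor:action.of.weak.equivalences.on.three-arrow.zigzags} (taking the weak equivalence on the codomain side to be $\id_V$), and its product with the identity functor on $\mathcal{C}^{\bracket{n - 1 ; -1}} \argp{V, Y}$ is again a weak homotopy equivalence, since a product with a fixed category is a homotopy pullback over $\bracket{0}$ by \autoref{cor:binary.products.are.homotopy.products} and homotopy pullbacks are homotopy-invariant. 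This settles the replacement of $X$; the replacement of $Y$ is handled by the mirror-image argument, applying the homotopical three-arrow calculus instead with $k = n - 1$, $l = 1$ and then splitting off the final $\bracket{-1 ; 1 ; -1}$ block.

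I expect the only real work to be bookkeeping. The delicate point is the splitting isomorphism: one must keep the variances straight\hairspace ---\hairspace $\mathcal{H}_{X, Y}$ is \emph{contravariant} in $V$, since each of the two sub-zigzags touches $V$ through a single arrow (leftward for the first, rightward for the second) that can only be precomposed, not postcomposed, with a weak equivalence into $V$\hairspace ---\hairspace and one must match the morphisms of $\mathcal{C}^{\bracket{-1 ; 1 ; -1 ; n - 1 ; -1}} \argp{X, Y}$ with those of $\mathcal{H}_{X, Y} \otimes_{\weq \mathcal{C}} {*}$ by absorbing the comparison morphism at the splitting vertex into the base. Everything else is an immediate assembly of \autoref{dfn:homotopical.three-arrow.calculus}, \autoref{cor:action.of.weak.equivalences.on.three-arrow.zigzags}, \autoref{lem:two-sided.Grothendieck.construction.is.homotopical}, and \autoref{cor:binary.products.are.homotopy.products}.
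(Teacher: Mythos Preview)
Your argument is correct, and it takes a genuinely different route from the paper's. The paper proceeds by induction on $n$: it passes from $\bracket{-1;n+1;-1}$ to $\bracket{-1;1;-2;n;-1}$ (using \autoref{lem:composing.weak.equivalences.in.zigzags} together with the three-arrow calculus), then invokes Proposition~9.4 of \citep{Dwyer-Kan:1980b} to split at the $-2$ as the \emph{two-sided} Grothendieck construction $\mathcal{C}^{\bracket{-1;1;-1}}\argp{X,\blank} \otimes_{\weq\mathcal{C}} \mathcal{C}^{\bracket{-1;n;-1}}\argp{\blank,Y}$. Varying $Y$ then requires knowing that $\mathcal{C}^{\bracket{-1;n;-1}}\argp{V,Y} \to \mathcal{C}^{\bracket{-1;n;-1}}\argp{V,Y'}$ is a weak equivalence for each $V$, which is exactly the inductive hypothesis.

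By contrast, you split $\bracket{-1;1;-1;n-1;-1}$ at the single $-1$ as a \emph{one-sided} Grothendieck construction of the product-valued functor $V \mapsto \mathcal{C}^{\bracket{-1;1;-1}}\argp{X,V} \times \mathcal{C}^{\bracket{n-1;-1}}\argp{V,Y}$. Because only the first factor depends on $X$, the base case $n=1$ suffices and no induction is needed. The trade-off is that your decomposition does not fall under the statement of Dwyer--Kan's Proposition~9.4 (which requires both halves to abut the splitting vertex through a leftward arrow, so that one factor is contravariant and the other covariant); you therefore have to verify the isomorphism $\mathcal{C}^{\bracket{-1;1;-1;n-1;-1}}\argp{X,Y} \cong \mathcal{H}_{X,Y} \otimes_{\weq\mathcal{C}} {*}$ by hand, which you correctly flag as the one place needing care. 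Both approaches are short; yours is arguably more direct, while the paper's keeps closer to citable black boxes.
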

\begin{proof} 
Since the class of weak homotopy equivalences of categories is closed under composition, it suffices to verify the claim when either $X \to X'$ or $Y' \to Y$ is an identity morphism; but the two cases are formally dual, so it is enough to prove the claim in the first case.

The special case $n = 1$ is \autoref{cor:action.of.weak.equivalences.on.three-arrow.zigzags}. In general, we have the following commutative diagram,
\[
\begin{tikzcd}
\mathcal{C}^{\bracket{-1 ; n + 1; -1}} \argp{X, Y} \dar \rar &
\mathcal{C}^{\bracket{-1 ; 1 ; -2 ; n ; -1}} \argp{X, Y} \dar \\
\mathcal{C}^{\bracket{-1 ; n + 1 ; -1}} \argp{X, Y'} \rar &
\mathcal{C}^{\bracket{-1 ; 1 ; -2 ; n ; -1}} \argp{X, Y'}
\end{tikzcd}
\]
where the horizontal arrows are the evident functors defined by inserting (two) identity morphisms. The horizontal arrows are weak homotopy equivalences of categories by \autoref{lem:composing.weak.equivalences.in.zigzags} and the hypothesis that $\mathcal{C}$ admits a homotopical three-arrow calculus, and the right vertical arrow is a weak homotopy equivalence of categories by Proposition 9.4 in \citep{Dwyer-Kan:1980b} and \autoref{lem:two-sided.Grothendieck.construction.is.homotopical}; thus, by the 2-out-of-3 property, the left vertical arrow is also a weak homotopy equivalence of categories, as required.
\end{proof}

We now prove that the Rezk classification diagram of a relative category admitting a homotopical three-arrow calculus has the asserted Reedy homotopy type, \ie, that any Reedy-fibrant replacement satisfies the Segal condition. This result sits at the heart of our main theorem (\ref{thm:homotopical.three-arrow.calculi.and.the.Rezk.classification.diagram}).

\begin{prop}
\label{prop:homotopical.three-arrow.calculi.and.the.Segal.condition}
Let $\mathcal{C}$ be a relative category and let $n$ be a positive integer.
If $\mathcal{C}$ admits a homotopical three-arrow calculus, then we have a homotopy pullback diagram in $\cat{\Cat}$ of the form below,
\[
\begin{tikzcd}
\weq \RelFun{\bracket{n+1}}{\mathcal{C}} \dar[swap]{p} \rar{d_0} &
\weq \RelFun{\bracket{n}}{\mathcal{C}} \dar{\dom} \\
\weq \RelFun{\bracket{1}}{\mathcal{C}} \rar[swap]{\codom} &
\weq \mathcal{C}
\end{tikzcd}
\]
where $p : \weq \RelFun{\bracket{n+1}}{\mathcal{C}} \to \weq \RelFun{\bracket{1}}{\mathcal{C}}$ is the functor defined by sending each composable sequence of morphisms in $\mathcal{C}$ of length $n + 1$, say
\[
\begin{tikzcd}
X_0 \rar{f_1} &
X_1 \rar &
\cdots \rar &
X_{n+1}
\end{tikzcd}
\]
(considered as an object in $\weq \Func{\bracket{n+1}}{\mathcal{C}}$), to the morphism $f_1 : X_0 \to X_1$ (considered as an object in $\weq \Func{\bracket{1}}{\mathcal{C}}$).
\end{prop}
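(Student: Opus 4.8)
The plan is to realize the square in the statement --- which is already a \emph{strict} pullback --- as a levelwise weak homotopy equivalence away from a strict pullback square to which \autoref{thm:Quillen-B} applies directly. Two combinatorial facts about zigzag types underlie this. First, $\bracket{n+1} = \bracket{1} \cup_{\bracket{0}} \bracket{n}$, where the copy of $\bracket{0}$ is glued on as the codomain of $\bracket{1}$ and the domain of $\bracket{n}$; since $\weq\RelFun{\blank}{\mathcal{C}}$ carries pushouts of relative categories to pullbacks of categories, the square in the statement is precisely the strict pullback of $\codom : \weq\RelFun{\bracket{1}}{\mathcal{C}} \to \weq\mathcal{C}$ along $\dom : \weq\RelFun{\bracket{n}}{\mathcal{C}} \to \weq\mathcal{C}$, with $p$ and $d_0$ the two projections. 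Second, $\bracket{-1 ; 1 ; -1 ; n ; -1} = \bracket{-1 ; 1 ; -1} \cup_{\bracket{0}} \bracket{n ; -1}$, again gluing $\bracket{0}$ on as the codomain of the first factor and the domain of the second; hence $\weq\RelFun{\bracket{-1 ; 1 ; -1 ; n ; -1}}{\mathcal{C}}$ is the strict pullback of $\codom : \weq\RelFun{\bracket{-1 ; 1 ; -1}}{\mathcal{C}} \to \weq\mathcal{C}$ along $\dom : \weq\RelFun{\bracket{n ; -1}}{\mathcal{C}} \to \weq\mathcal{C}$.

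Next I would show this second square is a homotopy pullback diagram by applying \autoref{thm:Quillen-B} to $\codom : \weq\RelFun{\bracket{-1 ; 1 ; -1}}{\mathcal{C}} \to \weq\mathcal{C}$. By \autoref{prop:Grothendieck.bifibration.of.zigzags} this is a split Grothendieck fibration, and under the canonical isomorphism ${*} \otimes_{\weq\mathcal{C}} \mathcal{C}^{\bracket{-1 ; 1 ; -1}} \otimes_{\weq\mathcal{C}} {*} \cong \weq\RelFun{\bracket{-1 ; 1 ; -1}}{\mathcal{C}}$ its fibre over an object $b$ is ${*} \otimes_{\weq\mathcal{C}} \mathcal{C}^{\bracket{-1 ; 1 ; -1}} \argp{\blank, b}$; for a weak equivalence $b \to b'$ the induced functor between fibres is obtained by applying ${*} \otimes_{\weq\mathcal{C}} (\blank)$ to the natural transformation $\mathcal{C}^{\bracket{-1 ; 1 ; -1}} \argp{\blank, b'} \to \mathcal{C}^{\bracket{-1 ; 1 ; -1}} \argp{\blank, b}$, each component of which is a weak homotopy equivalence by \autoref{cor:action.of.weak.equivalences.on.three-arrow.zigzags} (taking the identity of $\blank$ in the domain variable), so the induced functor is a weak homotopy equivalence by \autoref{lem:two-sided.Grothendieck.construction.is.homotopical}. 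Thus \autoref{thm:Quillen-B} applies, and the pullback square for $\bracket{-1 ; 1 ; -1 ; n ; -1}$ is a homotopy pullback diagram.

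It then remains to produce a levelwise weak homotopy equivalence from the square in the statement to the pullback square for $\bracket{-1 ; 1 ; -1 ; n ; -1}$. On $\weq\RelFun{\bracket{1}}{\mathcal{C}}$ I would take the insertion-of-two-identities functor into $\weq\RelFun{\bracket{-1 ; 1 ; -1}}{\mathcal{C}}$, which is the functor $s^2$ (case $k = 1$) of \autoref{lem:comparing.the.arrow.category.and.the.three-arrow.zigzag.category}, hence a weak homotopy equivalence, and which moreover commutes \emph{strictly} with the codomain projections; on $\weq\RelFun{\bracket{n}}{\mathcal{C}}$ the insertion-of-one-identity functor into $\weq\RelFun{\bracket{n ; -1}}{\mathcal{C}}$, a weak homotopy equivalence by the evident one-sided variant of \autoref{lem:comparing.the.arrow.category.and.the.three-arrow.zigzag.category} (same proof), which commutes strictly with the domain projections; on $\weq\mathcal{C}$ the identity; and on the apices the functor $\Phi : \weq\RelFun{\bracket{n+1}}{\mathcal{C}} \to \weq\RelFun{\bracket{-1 ; 1 ; -1 ; n ; -1}}{\mathcal{C}}$ inserting identities before the chain, after the chain, and after its first arrow. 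This $\Phi$ factors as $\weq\RelFun{\bracket{n+1}}{\mathcal{C}} \to \weq\RelFun{\bracket{-1 ; n+1 ; -1}}{\mathcal{C}} \to \weq\RelFun{\bracket{-1 ; 1 ; -1 ; n ; -1}}{\mathcal{C}}$, the first map being $s^2$ (case $k = n+1$ of \autoref{lem:comparing.the.arrow.category.and.the.three-arrow.zigzag.category}) and the second being ${*} \otimes_{\weq\mathcal{C}} (\blank) \otimes_{\weq\mathcal{C}} {*}$ applied to the insertion functor $\mathcal{C}^{\bracket{-1 ; n+1 ; -1}} \to \mathcal{C}^{\bracket{-1 ; 1 ; -1 ; n ; -1}}$, which is a componentwise weak homotopy equivalence by \autoref{dfn:homotopical.three-arrow.calculus} (with $k = 1$, $l = n$, using $\bracket{-1 ; 1 ; n ; -1} = \bracket{-1 ; n+1 ; -1}$); hence $\Phi$ is a weak homotopy equivalence by \autoref{lem:two-sided.Grothendieck.construction.is.homotopical}. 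A direct check on objects --- the point being that inserting the identities intertwines $p$ with the first-piece projection and $d_0$ with the second-piece projection --- confirms that these four functors form a morphism of commutative squares. Since a homotopy pullback diagram remains one after a levelwise weak homotopy equivalence, the square in the statement is a homotopy pullback diagram.

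The part I expect to take the most care is the combinatorial bookkeeping: pinning down the zigzag type $\bracket{-1 ; 1 ; -1 ; n ; -1}$ together with its decomposition, and checking that the comparison functors commute \emph{strictly} --- not merely up to natural weak equivalence --- with all the structure maps. It is this requirement that dictates using $\bracket{n ; -1}$ rather than $\bracket{-1 ; n ; -1}$ for the right-hand factor (so that the comparison can be made over $\dom$) and using insertion rather than deletion functors throughout; everything else is either cited or a routine one-sided rerun of \autoref{lem:comparing.the.arrow.category.and.the.three-arrow.zigzag.category}.
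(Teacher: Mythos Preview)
Your proof is correct and follows the same architecture as the paper's: build a commutative cube whose back face is the square in the statement and whose front face is a ``padded'' pullback square to which \autoref{thm:Quillen-B} applies, then use that the oblique arrows are weak homotopy equivalences (via \autoref{lem:comparing.the.arrow.category.and.the.three-arrow.zigzag.category}, its one-sided variant, and the three-arrow calculus hypothesis) to transfer the homotopy pullback property. The difference lies in the choice of padded types. The paper takes $T_1 = \bracket{-1;1;-1}$, $T_n = \bracket{-1;n;-1}$, and $M_n = \bracket{-1;1;-2;n;-1} = T_1 \cup_{\bracket{0}} T_n$, and applies \autoref{thm:Quillen-B} to the opfibration $\dom : \weq\RelFun{T_n}{\mathcal{C}} \to \weq\mathcal{C}$; checking that its fibres transform by weak equivalences requires \autoref{lem:action.of.weak.equivalences.on.extended.three-arrow.zigzags} for general $n$. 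You instead take $\bracket{-1;1;-1}$, $\bracket{n;-1}$, and $\bracket{-1;1;-1;n;-1} = \bracket{-1;1;-1} \cup_{\bracket{0}} \bracket{n;-1}$, and apply \autoref{thm:Quillen-B} to the fibration $\codom : \weq\RelFun{\bracket{-1;1;-1}}{\mathcal{C}} \to \weq\mathcal{C}$, whose fibre invariance needs only \autoref{cor:action.of.weak.equivalences.on.three-arrow.zigzags} (the case $n=1$). So your route bypasses \autoref{lem:action.of.weak.equivalences.on.extended.three-arrow.zigzags} entirely, at the cost of an asymmetric padding and the ad hoc one-sided version of \autoref{lem:comparing.the.arrow.category.and.the.three-arrow.zigzag.category}; the paper's symmetric padding keeps both factors within the scope of \autoref{prop:Grothendieck.bifibration.of.zigzags} but must invoke the stronger lemma.
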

\begin{proof}
Let $T_1 = \bracket{-1 ; 1 ; -1}$, $T_n = \bracket{-1 ; n ; -1}$, and $M_n = \bracket{-1; 1 ; -2 ; n ; -1}$. We have a commutative cube in $\cat{\Cat}$ of the form below,
\[
\begin{tikzcd}[column sep=3.0ex, row sep=3.0ex]
\weq \RelFun{\bracket{n+1}}{\mathcal{C}} \arrow{dd}[swap]{p} \drar \arrow{rr}{d_0} &&
\weq \RelFun{\bracket{n}}{\mathcal{C}} \arrow{dd}[near end]{\dom} \drar \\
&
\weq \RelFun{M_n}{\mathcal{C}} \arrow[crossing over]{rr} &&
\weq \RelFun{T_n}{\mathcal{C}} \arrow{dd}{\dom} \\
\weq \RelFun{\bracket{1}}{\mathcal{C}} \drar \arrow{rr}[swap, near end]{\codom} &&
\weq \mathcal{C} \drar[equals] \\
&
\weq \RelFun{T_1}{\mathcal{C}} \arrow[leftarrow, crossing over]{uu} \arrow[swap]{rr}{\codom} &&
\weq \mathcal{C}
\end{tikzcd}
\]
where the non-trivial oblique arrows are defined by inserting identity morphisms and both the front and back faces of the cube are pullback squares in $\cat{\Cat}$; moreover, by \autoref{thm:Quillen-B}, \autoref{lem:two-sided.Grothendieck.construction.is.homotopical},  \autoref{prop:Grothendieck.bifibration.of.zigzags}, and \autoref{lem:action.of.weak.equivalences.on.extended.three-arrow.zigzags}, the front face is a homotopy pullback diagram. Since the diagonal arrows are weak homotopy equivalences (by lemmas~\ref{lem:composing.weak.equivalences.in.zigzags} and~\ref{lem:comparing.the.arrow.category.and.the.three-arrow.zigzag.category} plus the hypothesis that $\mathcal{C}$ admits a homotopical three-arrow calculus), it follows that the back face is also a homotopy pullback diagram.
\end{proof}

We now come to the main theorem.

\begin{thm}
\label{thm:homotopical.three-arrow.calculi.and.the.Rezk.classification.diagram}
Let $\mathcal{C}$ be a relative category and let $\Nv{\mathcal{C}}_{\bullet}$ be the Rezk classification diagram for $\mathcal{C}$, \ie, the bisimplicial set defined by the following formula,
\[
\Nv{\mathcal{C}}_n = \nv{\weq \RelFun{\bracket{n}}{\mathcal{C}}}
\]
and let $\widehat{\Nv{\mathcal{C}}}_{\bullet}$ be any Reedy-fibrant replacement for $\Nv{\mathcal{C}}_{\bullet}$. If $\mathcal{C}$ admits a homotopical three-arrow calculus, then:
\begin{enumerate}[(i)]
\item $\widehat{\Nv{\mathcal{C}}}_{\bullet}$ is a Segal space.

\item Assuming weak equivalences in $\mathcal{C}$ have the 2-out-of-3 property, $\widehat{\Nv{\mathcal{C}}}_{\bullet}$ is a complete Segal space if and only if $\mathcal{C}$ is a saturated relative category.
\end{enumerate}
\end{thm}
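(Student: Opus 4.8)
The plan is to read off both claims from \autoref{prop:homotopical.three-arrow.calculi.and.the.Segal.condition} and \autoref{prop:classifying.space.of.morphisms.with.homotopical.three-arrow.calculi} together with the standard yoga of (complete) Segal spaces. Write $W$ for $\widehat{\Nv{\mathcal{C}}}_{\bullet}$; it is connected to $\Nv{\mathcal{C}}_{\bullet}$ by a Reedy\hairspace ---\hairspace hence levelwise\hairspace ---\hairspace weak equivalence compatible with all face and degeneracy maps, so homotopy pullback squares, the Segal maps, the homotopy category, the subspace of homotopy equivalences, and the completeness map may all be computed on $\Nv{\mathcal{C}}_{\bullet}$. \emph{Part (i).} A Reedy-fibrant simplicial space is a Segal space precisely when, for each $n \ge 2$, the canonical map from $W_n$ to the $n$-fold homotopy fibre product $W_1 \times^{\mathrm{h}}_{W_0} \cdots \times^{\mathrm{h}}_{W_0} W_1$ is a weak equivalence; since this map is, by induction, a composite of comparison maps from $W_{m+1}$ into the homotopy pullback of $W_1 \xrightarrow{\codom} W_0 \xleftarrow{\dom} W_m$ (given by the first edge and the $0$-th face), and homotopy pullbacks preserve weak equivalences, it suffices to prove that for each positive $n$ the square
\[
\begin{tikzcd}
\Nv{\mathcal{C}}_{n+1} \dar[swap]{p} \rar{d_0} & \Nv{\mathcal{C}}_n \dar{\dom} \\
\Nv{\mathcal{C}}_1 \rar[swap]{\codom} & \Nv{\mathcal{C}}_0
\end{tikzcd}
\]
is a homotopy pullback of simplicial sets. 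But this square is obtained by applying $\nv$ to the homotopy pullback square of \autoref{prop:homotopical.three-arrow.calculi.and.the.Segal.condition} (recall $\Nv{\mathcal{C}}_k = \nv{\weq \RelFun{\bracket{k}}{\mathcal{C}}}$ and that $p$, $d_0$ there are exactly the first-edge operator and the face operator $d_0$), and $\nv$ takes homotopy pullback diagrams of categories to homotopy pullback diagrams of simplicial sets by definition; so $W$ is a Segal space, with no use of the 2-out-of-3 property.

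\emph{Part (ii).} By (i) this is only about the completeness condition, so I first identify $\Ho W$ with $\Ho \mathcal{C}$. The objects of $\Ho W$ are the components of $\Nv{\mathcal{C}}_0 = \nv{\weq \mathcal{C}}$, and by \autoref{prop:classifying.space.of.morphisms.with.homotopical.three-arrow.calculi} and \autoref{thm:fundemental.theorem.of.homotopical.three-arrow.calculi} the mapping space of $W$ from $X$ to $Y$ is weakly equivalent to $\nv{\mathcal{C}^{\bracket{-1 ; 1 ; -1}} \argp{X, Y}} \simeq \ulHom[\LH \mathcal{C}]{X}{Y}$, whose set of components is $\Ho \mathcal{C} \argp{X, Y}$. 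Each weak equivalence $w : X \to Y$ of $\mathcal{C}$ lies in the same component of $\nv{\weq \RelFun{\bracket{1}}{\mathcal{C}}}$ as the degeneracy $\id_X$ (via the morphism of $\weq \RelFun{\bracket{1}}{\mathcal{C}}$ whose vertical edges are $\id_X$ and $w$), hence is a homotopy equivalence in $W$; so the universal property of $\Ho \mathcal{C}$ gives a functor $\Ho \mathcal{C} \to \Ho W$, and it is essentially surjective by construction and fully faithful by the computation of mapping spaces (using the naturality of \autoref{thm:fundemental.theorem.of.homotopical.three-arrow.calculi}), hence an equivalence of categories. Consequently a $1$-simplex of $W$, \ie, a morphism $f$ of $\mathcal{C}$, is a homotopy equivalence if and only if $f$ is invertible in $\Ho \mathcal{C}$; and since the morphisms of $\weq \RelFun{\bracket{1}}{\mathcal{C}}$ are commutative squares whose vertical sides are weak equivalences (hence invertible in $\Ho \mathcal{C}$), this property is constant on connected components. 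Therefore the subspace $W_{\mathrm{heq}} \subseteq W_1$ of homotopy equivalences is $\nv{\mathcal{E}}$, where $\mathcal{E} \subseteq \weq \RelFun{\bracket{1}}{\mathcal{C}}$ is the full subcategory on the morphisms of $\mathcal{C}$ that are invertible in $\Ho \mathcal{C}$, and the completeness map $s_0 : W_0 \to W_{\mathrm{heq}}$ is the nerve of the functor $J : \weq \mathcal{C} \to \mathcal{E}$, $X \mapsto \id_X$.

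Both implications now follow. If $\mathcal{C}$ is saturated, the objects of $\mathcal{E}$ are exactly the weak equivalences of $\mathcal{C}$, so for each object $f : X \to Y$ of $\mathcal{E}$ the commutative square with left edge $\id_X$ and right edge $f$ is a morphism of $\mathcal{E}$; these assemble into a natural transformation $J \circ \dom \hoto \id_{\mathcal{E}}$, while $\dom \circ J = \id_{\weq \mathcal{C}}$, so by \autoref{lem:homotopy.equivalences.of.categories} the functor $J$ is a weak homotopy equivalence of categories, whence $s_0 : W_0 \to W_{\mathrm{heq}}$ is a weak equivalence and $W$ is a complete Segal space. Conversely, if $W$ is complete then $s_0$ is a weak equivalence, so $\pi_0 \nv{\weq \mathcal{C}} \to \pi_0 \nv{\mathcal{E}}$ is a bijection and in particular every connected component of $\mathcal{E}$ contains an identity morphism; but the 2-out-of-3 property of the weak equivalences of $\mathcal{C}$ makes ``being a weak equivalence of $\mathcal{C}$'' constant on components of $\mathcal{E}$ (apply 2-out-of-3 to the two horizontal edges and the two weakly-equivalent vertical edges of any morphism of $\mathcal{E}$), so every object of $\mathcal{E}$\hairspace ---\hairspace every morphism of $\mathcal{C}$ invertible in $\Ho \mathcal{C}$\hairspace ---\hairspace is a weak equivalence, which is precisely saturation. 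The genuine content is carried by the two propositions; the main obstacle is the bookkeeping behind $W_{\mathrm{heq}} = \nv{\mathcal{E}}$ and $s_0 = \nv{J}$ and its transport across the Reedy-fibrant replacement, which takes care but introduces no new ideas.
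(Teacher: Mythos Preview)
Your proof is correct and follows the same strategy as the paper: part (i) reduces immediately to \autoref{prop:homotopical.three-arrow.calculi.and.the.Segal.condition}, and part (ii) identifies the space of homotopy equivalences and then checks the degeneracy map is a weak homotopy equivalence via \autoref{lem:homotopy.equivalences.of.categories}, with the converse following from the 2-out-of-3 property along a connected-component argument.

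The one noteworthy difference is expository. For the `if' direction of (ii), the paper simply asserts that a vertex of $\Nv{\mathcal{C}}_1$ is an equivalence in the Segal space iff it becomes an isomorphism in $\Ho \mathcal{C}$, deferring the justification to the last paragraph of Rezk's proof of his Theorem~8.3. You instead establish this identification internally, invoking \autoref{prop:classifying.space.of.morphisms.with.homotopical.three-arrow.calculi} and \autoref{thm:fundemental.theorem.of.homotopical.three-arrow.calculi} to compute the mapping spaces of $W$ as $\nv{\mathcal{C}^{\bracket{-1;1;-1}}\argp{X,Y}} \simeq \ulHom[\LH \mathcal{C}]{X}{Y}$ and hence to identify $\Ho W$ with $\Ho \mathcal{C}$. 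This makes your argument more self-contained within the paper and explains why \autoref{prop:classifying.space.of.morphisms.with.homotopical.three-arrow.calculi} was proved at all (the paper never cites it in the proof of the main theorem, only in the subsequent remark). Otherwise the two arguments coincide.
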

\begin{proof}
(i). Since $\widehat{\Nv{\mathcal{C}}}_{\bullet}$ is a Reedy-fibrant bisimplicial set, it suffices to verify that the following diagram is a homotopy pullback diagram for every positive integer $n$,
\[
\begin{tikzcd}
\widehat{\Nv{\mathcal{C}}}_{n+1} \dar \rar{d_0} &
\widehat{\Nv{\mathcal{C}}}_n \dar \\
\widehat{\Nv{\mathcal{C}}}_1 \rar[swap]{d_0} &
\widehat{\Nv{\mathcal{C}}}_0
\end{tikzcd}
\]
where the right (\resp left) vertical arrow is the iterated face operator $d_1 \circ \cdots \circ d_n$ (\resp $d_2 \circ \cdots \circ d_{n+1}$); however we have a Reedy weak equivalence $\Nv{\mathcal{C}}_{\bullet} \to \widehat{\Nv{\mathcal{C}}}_{\bullet}$, so the claim is a consequence of \autoref{prop:homotopical.three-arrow.calculi.and.the.Segal.condition}.

\bigskip\noindent
(ii). For the `if' direction, we repeat the argument of the last paragraph of the proof of Theorem 8.3 in \citep{Rezk:2001}: since $\mathcal{C}$ is saturated, a vertex of $\Nv{\mathcal{C}}_1$ is an equivalence (\ie, becomes an isomorphism in $\Ho \mathcal{C}$) if and only if it is a weak equivalence in $\mathcal{C}$, so the space of homotopy equivalences in $\Nv{\mathcal{C}}_{\bullet}$ is the nerve $\nv{\Func{\bracket{1}}{\weq \mathcal{C}}}$. Following \autoref{lem:homotopy.equivalences.of.categories}, the induced morphism $\Nv{\mathcal{C}}_0 \to \nv{\Func{\bracket{1}}{\weq \mathcal{C}}}$ is a homotopy equivalence of simplicial sets, so $\Nv{\mathcal{C}}_{\bullet}$ is indeed a complete Segal space if $\mathcal{C}$ is saturated.

For the `only if' direction, we simply observe that if $f : X \to Y$ is a morphism in $\mathcal{C}$ that becomes an isomorphism in $\Ho \mathcal{C}$, then the corresponding vertex of $\Nv{\mathcal{C}}_1$ must be in the same connected component as the vertex corresponding to $\id_X$ (because $\Nv{\mathcal{C}}_{\bullet}$ is a complete Segal space). Thus, by using the 2-out-of-3 property of weak equivalences and induction, we deduce that $f$ is a weak equivalence in $\mathcal{C}$ if $f$ becomes an isomorphism in $\Ho \mathcal{C}$.
\end{proof}

\begin{cor}
Let $\mathcal{M}$ be a model category, let $\Nv{\mathcal{M}}_{\bullet}$ be the Rezk classification diagram for $\mathcal{M}$, and let $\widehat{\Nv{\mathcal{M}}}_{\bullet}$ be any Reedy-fibrant replacement for $\Nv{\mathcal{M}}_{\bullet}$. Then $\widehat{\Nv{\mathcal{M}}}_{\bullet}$ is a complete Segal space.
\end{cor}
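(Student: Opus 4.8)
The plan is to deduce the corollary directly from \autoref{thm:homotopical.three-arrow.calculi.and.the.Rezk.classification.diagram}(ii), so the entire task is to verify its three hypotheses for a model category $\mathcal{M}$, viewed as a relative category with its weak equivalences.

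First, $\mathcal{M}$ admits a homotopical three-arrow calculus: this is exactly the last assertion of the example on homotopically replete subcategories of a model category, applied with $\mathcal{M}$ regarded as a (vacuously homotopically replete) full subcategory of itself, so it is already established. Second, the weak equivalences of $\mathcal{M}$ satisfy the 2-out-of-3 property by the very definition of a closed model category. Third --- the only input not yet recorded in the paper --- the relative category $\mathcal{M}$ is \emph{saturated}. For this I would invoke the classical fact, going back to \citet{Quillen:1967}, that a morphism of a model category which becomes invertible in its homotopy category is a weak equivalence: choosing a cofibrant and then a fibrant replacement of the given morphism and applying the 2-out-of-3 property reduces to the case of a morphism $f$ between fibrant-cofibrant objects; for such $f$, invertibility in $\Ho \mathcal{M}$ is equivalent to being a homotopy equivalence (with respect to cylinder and path objects), and Whitehead's theorem for model categories identifies homotopy equivalences between fibrant-cofibrant objects with weak equivalences.

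With these three facts in hand, \autoref{thm:homotopical.three-arrow.calculi.and.the.Rezk.classification.diagram}(ii) immediately yields that any Reedy-fibrant replacement $\widehat{\Nv{\mathcal{M}}}_{\bullet}$ is a complete Segal space. There is no genuine obstacle internal to this argument; the only point deserving a moment's care is that the cited proof of saturation should use nothing beyond finite limits and colimits together with the (not necessarily functorial) replacement functors, so that it remains valid for the weak notion of model category adopted in the conventions --- which it does, since the reduction sketched above invokes only factorizations, finite (co)limit constructions, and the 2-out-of-3 axiom.
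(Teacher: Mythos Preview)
Your proposal is correct and follows essentially the same route as the paper's own proof: verify that a model category satisfies the hypotheses of \autoref{thm:homotopical.three-arrow.calculi.and.the.Rezk.classification.diagram}(ii) (homotopical three-arrow calculus, 2-out-of-3, saturation) and apply it. The paper is merely terser, citing paragraph 8.1 of \citep{Dwyer-Kan:1980c} directly for the three-arrow calculus and \citep{Hovey:1999,Hirschhorn:2003} for saturation rather than sketching the Whitehead-type argument as you do.
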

\begin{proof}
By paragraph 8.1 in \citep{Dwyer-Kan:1980c}, $\mathcal{M}$ admits a homotopical three-arrow calculus, and it is well known that $\mathcal{M}$ is a saturated relative category,\footnote{See \eg, Theorem 1.2.10 in \citep{Hovey:1999} or Theorem 8.3.10 in \citep{Hirschhorn:2003}.} so we may apply \autoref{thm:homotopical.three-arrow.calculi.and.the.Rezk.classification.diagram}.
\end{proof}

\begin{remark}
Let $\mathcal{C}$ be a relative category and let $\mathcal{C}^{\natural}$ be $\und \mathcal{C}$ considered as a relative category where the weak equivalences are the isomorphisms, and let $\widehat{\Nv{\mathcal{C}}}_{\bullet}$ be a fibrant replacement for $\Nv{\mathcal{C}}_{\bullet}$ \emph{in the model structure for complete Segal spaces}. Then $\widehat{\Nv{\mathcal{C}}}_{\bullet}$ has the expected homotopy-theoretic universal property with respect to complete Segal spaces, namely:
\begin{itemize}
\item For each complete Segal space $\mathcal{D}_{\bullet}$, the evident morphism $\Nv{\mathcal{C}^{\natural}}_{\bullet} \to \widehat{\Nv{\mathcal{C}}}_{\bullet}$ induces (by precomposition) a weak homotopy equivalence
\[
\ulHom{\widehat{\Nv{\mathcal{C}}}}{\mathcal{D}} \to \ulHom \sp{\prime} \argptwo{\Nv{\mathcal{C}^{\natural}}}{\mathcal{D}}
\]
where the codomain is the simplicial set of morphisms $\Nv{\mathcal{C}^{\natural}}_{\bullet} \to \mathcal{D}_{\bullet}$ that send morphisms in $\weq \mathcal{C}$ to equivalences in $\mathcal{D}_{\bullet}$.
\end{itemize}
This is true without further hypotheses on $\mathcal{C}$: see \citep{MO:answer-93139}. In view of \autoref{prop:classifying.space.of.morphisms.with.homotopical.three-arrow.calculi} and \autoref{thm:homotopical.three-arrow.calculi.and.the.Rezk.classification.diagram}, the above yields another proof of the correctness of the hammock localization of $\mathcal{C}$.
\end{remark}

\begin{remark}
It is possible for $\widehat{\Nv{\mathcal{C}}}_{\bullet}$ to be a complete Segal space \emph{without} $\mathcal{C}$ admitting a homotopical three-arrow calculus. Indeed, if every morphism in $\mathcal{C}$ is a weak equivalence, then every face and degeneracy operator of $\Nv{\mathcal{C}}_{\bullet}$ is a weak homotopy equivalence of simplicial sets (by \autoref{lem:homotopy.equivalences.of.categories}), so $\widehat{\Nv{\mathcal{C}}}_{\bullet}$ is a complete Segal space. (In fact, up to Reedy weak equivalence, every complete Segal space in which every morphism is invertible arises in this way.) On the other hand, $\mathcal{C}$ could be a relative category in which there is no upper bound to the length of zigzags needed to represent morphisms in $\Ho \mathcal{C}$, such as the relative category generated by the infinite graph of the form below,
\[
\hspace{-0.5in}
\begin{tikzcd}
\cdots \rar &
\bullet \rar[leftarrow] &
\bullet \rar &
\bullet \rar[leftarrow] &
\bullet \rar &
\bullet \rar[leftarrow] &
\bullet \rar &
\bullet \rar[leftarrow] &
\cdots
\end{tikzcd}
\hspace{-0.5in}
\]
with all morphisms being weak equivalences.

It is also possible for $\widehat{\Nv{\mathcal{C}}}_{\bullet}$ to be a complete Segal space when the weak equivalences in $\mathcal{C}$ do \emph{not} have the 2-out-of-3 property. Indeed, the (coun\-ter)\-ex\-am\-ple considered in \autoref{rem:2-out-of-3.and.the.homotopical.three-arrow.calculus} has the property that $\Nv{\mathcal{C}}_{\bullet}$ is levelwise contractible, so $\widehat{\Nv{\mathcal{C}}}_{\bullet}$ is trivially a complete Segal space.

Other than the observation made in the introduction, we know of no necessary conditions.
\end{remark}

\ifdraftdoc

\else
  \printbibliography
\fi

\end{document}